\documentclass[a4paper,11pt]{article}
\usepackage[utf8]{inputenc}
\usepackage{graphicx}
\usepackage[top=2cm, bottom=2cm, left=2.5cm, right=2.5cm]{geometry}
\usepackage{amsmath,amsthm,amsfonts,amssymb,amscd, dsfont}
\usepackage{lastpage}
\usepackage{enumerate}
\usepackage{fancyhdr}
\usepackage{mathrsfs}
\usepackage{xcolor}

\usepackage{listings}
\usepackage{subcaption}
\usepackage{hyperref}

\usepackage{imakeidx}
\usepackage{multirow}
\usepackage{float}
\usepackage{MnSymbol}
\usepackage{wasysym}
\usepackage{tikz}
\usepackage{tikz-cd}
\usepackage[T1]{fontenc}

\usepackage{authblk}
\usepackage{dirtytalk}
\usepackage[normalem]{ulem}

\DeclareMathOperator{\Homeo}{Homeo}
\DeclareMathOperator{\Aff}{Aff}
\DeclareMathOperator{\SL}{SL}
\DeclareMathOperator{\Dil}{Dil}

\DeclareMathOperator{\GL}{GL}
\DeclareMathOperator{\e}{end}

\newtheorem{theorem}{Theorem}[section]

\newtheorem{lemma}[theorem]{Lemma}
\newtheorem{definition}[theorem]{Definition}

\title{Horizon saddle connections and Veech groups of infinite-type dilation surfaces}

\author{Oscar Rutilio Molina Medrano}

\date{}

\begin{document}

\maketitle

\begin{abstract}
     This paper  studies Veech groups of dilation surfaces whose fundamental group is not finitely generated.  We prove that if $ S$ is a surface with  self-similar end space and every end is accumulated by genus, then every countable subgroup of $ \SL(2,\mathbb{R}) $ can be realized as the Veech group of a dilation surface homeomorphic to $ S$. Additionally, we can construct this dilation surface such that it realizes horizon saddle connections in a prescribed set of directions. This contrasts with the case of closed dilation surfaces, where horizon saddle connections impose restrictions on the algebraic structure of the Veech group.

\end{abstract}

\section{Introduction}\label{sec1}

    By a topological surface we mean a connected, orientable, and boundaryless $2$-dimensional real manifold.  A topological surface is of \textit{finite type} if its fundamental group is finitely generated. Otherwise, the surface is of \textit{infinite-type}. Some examples of infinite-type surfaces can be seen in Figure \ref{Figure_Infinite_type_surfaces}. 

    A \textit{dilation surface} is a topological surface endowed with a complex-valued atlas whose transition maps, except in the neighborhoods of a discrete set of singular points, are dilations composed with translations. In the particular case where all  transition maps are translations,  the dilation surface is a \textit{translation surface}

   Given a dilation surface $M$, the group consisting of the derivatives of orientation preserving transformations preserving the dilation structure on $M$ is a subgroup of $\GL^+(2,\mathbb{R})$. This group of matrices is known as the \textit{Veech group} of $M$. 
   
   The Veech group of finite-type dilation surfaces have been widely study and there are many interesting classical results about it. For example the Veech group of a closed translation surface isa  Fuchsian group that is never cocompact, the lector can read this and more interesting results about finite-type translation surfaces and their Veech groups in \cite{HS06} and \cite{Zor06}.
    
    More recently, there have been some results concerning the Veech groups of infinite-type translation surfaces. In \cite{AMRSVW23} Artigiani, Randecker, Sadanand, Valdez, and Weitze-Schmith\"usen showed that any countable subgroup of $\GL^+(2,\mathbb{R})$ can be realized as the Veech group of an infinite-type translation surface with self-similar space of ends, such that every end is accumulated by genus.

    On the other hand, dilation surfaces have historically been less studied, although interest in them has increased in recent years. The first authors to study the Veech groups of dilation surfaces were Eduard Duryev, Charles Fougeron, and Selim Ghazouani. In \cite{DFG19}, these authors proved that the Veech group of a closed dilation surface of genus $g\ge 2$ is either discrete or the subgroup of $\SL(2,\mathbb{R})$ consisting of upper triangular matrices. Moreover, they provided a complete description of which dilation surfaces have a non-discrete Veech group. Later, in \cite{Tahar21}, Guillaume Tahar introduced the notion of \textit{horizon saddle connections}. A horizon saddle connection is a saddle connection $\gamma$ for which there exists an integer $k$ such that for every leaf $\alpha$ of a directional foliation, $|\gamma\cap \alpha|\leq k$. Tahar proved that the existence of these rigidifies the algebraic structure of the Veech groups of closed dilation surfaces. In particular, if a closed dilation surface has horizon saddle connections in three different directions, the Veech group must be finite.

     The results we present constitute the first formal study of infinite-type dilation surfaces and their Veech groups. Adapting ideas  the work of Artigiani, Randecker, Sadanand, Valdez, and Weitze-Schmithüsen in \cite{AMRSVW23}, which focused on translation surfaces, we show that the existence of horizon saddle connections does not impose algebraic restrictions on the Veech group of infinite-type dilation surfaces. In particular, we prove that any countable subgroup of $\SL(2,\mathbb{R})$ can be realized as the Veech group of an infinite-type dilation surface. Moreover, these surfaces can be constructed so as to admit horizon saddle connections in a prescribed set of directions and to be homeomorphic to surfaces with particular topological properties. 

    \begin{theorem}\label{Main Theorem 1}
    Let $S$ be a topological surface with infinite genus and self-similar end-space. Let $G<\SL(2, \mathbb{R})$ be an infinite-countable subgroup and $D\subset \mathbb{S}^1$ a countable set of directions and $\{k_d\}_{d\in D}$ a sequence of positive integers. Then, there exists an uncountable family of non-isomorphic dilation surfaces $\{M_r\}_{r\in(0,1)}$ such that for each $r\in(0,1)$, $M_r$ is homeomorphic to $S$ and the following statements hold:

    \begin{enumerate}
        \item $\Gamma(M_r)=G$.
        \item  $\Aff(M_r) \cong G$.
        \item If $D\neq \emptyset$, then for every $d\in D$ there exists  a $k_d$-horizon saddle connection $\gamma_d\subset M_r$ with direction $d$.
    \end{enumerate}
\end{theorem}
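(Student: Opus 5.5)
We will adapt the construction of \cite{AMRSVW23} to the dilation setting. Fix a generating enumeration $G=\{g_0=\mathrm{Id},g_1,g_2,\dots\}$. The surface will be a "Cayley-type" gluing: for each $g\in G$ we take a copy $g\cdot P$ of a fixed base building block $P$. Here $P$ is a (possibly infinite) polygonal dilation surface with boundary slits, and $g\cdot P$ means applying $g$ to its charts. Edges of $g\cdot P$ are glued to edges of $gg_i\cdot P$ along slits in directions $g(v_i)$, where $v_i$ are fixed vectors. Because $G<\SL(2,\mathbb{R})$ acts linearly on charts and gluings are by translation (or dilation) of parallel slits, left multiplication by $h\in G$ maps $g\cdot P\mapsto hg\cdot P$. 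This permutes the blocks and respects all gluings, so it is an affine automorphism with derivative $h$. This gives $G\subset \Gamma(M_r)$ and an injective homomorphism $G\to\Aff(M_r)$.

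The dilation-specific ingredient is to decorate $P$ with gadgets. For each $i$, we insert a marker tied to $g_i$: a non-translation piece, for instance a Hopf-type dilation cylinder or affine cylinder of multiplier depending on $i$, or a cluster of $i$ marked singularities of prescribed cone angle, placed along the slit toward $gg_i\cdot P$. These markers make gluing directions recognizable. For each $d\in D$ we add to $P$ a small dilation cylinder whose boundary contains a saddle connection of direction $d$. By Tahar's local picture, the saddle connection bounding an affine cylinder in direction $d$ is a horizon saddle connection. We choose the cylinder's angle so that every leaf crosses it at most $k_d$ times; this is a local computation in one affine cylinder. To get leaves of the whole surface to cross $\gamma_d$ at most $k_d$ times, we place the cylinder so that leaves in direction $d$ entering it are trapped, i.e.\ it is attracting/repelling. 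The parameter $r\in(0,1)$ enters as the multiplier (dilation ratio) of one distinguished cylinder in $P$. Since multipliers of cylinders are affine invariants, different $r$ give non-isomorphic surfaces.

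Next, topology. Attach to each block an infinite-genus, self-similar "filler" built from translation pieces (e.g.\ glued copies of tori and slit planes), mimicking \cite{AMRSVW23}. Then every end is accumulated by genus and the end space is self-similar of the type of $S$. By Richards' classification, $M_r\cong S$. We will follow \cite{AMRSVW23} for the fact that the end space of the Cayley-type gluing is homeomorphic to that of $S$, using that $G$ is infinite.

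The main obstacle is the reverse inclusion $\Gamma(M_r)\subset G$ together with $\Aff(M_r)\cong G$, i.e.\ trivial kernel. The plan is as follows. Any affine automorphism $f$ preserves the set of singularities with their cone angles and the set of dilation cylinders with their multipliers. The block $P$ contains a uniquely identifiable configuration, namely the distinguished $r$-cylinder together with the markers of $g_1,g_2,\dots$. Hence $f$ must send the copy of $P$ at $\mathrm{Id}$ to the copy at some $h\in G$, with derivative $h$ restricted to that configuration. Then $h^{-1}\circ f$ fixes the base block and preserves markers. So it preserves adjacency labels, and by connectedness it is the identity. The delicate point is choosing markers rigid enough that an affine map fixing the base configuration must be the identity (no hidden symmetries or nontrivial dilations). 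We would first try decorating $P$ with singularities of pairwise distinct cone angles $2\pi(n+1)$ at asymmetric positions, which kills any nontrivial stabilizer.
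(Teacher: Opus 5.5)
\paragraph{Verdict.} Your architecture is the paper's: $G$-translates of one block are glued in a Cayley pattern along pairs of parallel slits. The paper glues the horizontal slits $g\cdot s(g_i,n)$ of $M_g$ to $gg_i\cdot t(g_i,n)$ in $M_{gg_i}$, where $t(g_i,n)\parallel g_i^{-1}(1,0)$. Your left-multiplication convention $g\cdot P\mapsto hg\cdot P$ is the one compatible with these gluings and with derivative $h$. Both proofs add a decoration to rigidify, an $r$-dependent invariant, and horizon gadgets. The proposal has two genuine gaps, the end-space topology and the rigidity step, and the horizon argument needs correcting.

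\paragraph{Main gap: the end space.} The theorem assumes only infinite genus and a self-similar end space, so $S$ may have planar ends (e.g.\ $E(S)$ a Cantor set and $E^g(S)$ a single point). A filler that makes every end accumulated by genus then produces the wrong surface. More seriously, nothing in your plan controls the end space of the Cayley-type gluing, and the appeal to \cite{AMRSVW23} does not supply it. Unless the slits joining adjacent blocks all escape into one prescribed end of each block, the ends of the Cayley graph of $G$ (a Cantor set for a free group, say) become additional ends of $M_r$. The paper's mechanism runs as follows.
\begin{enumerate}
\item Choose a star point $x_\infty\in E^g(S)$. Take $y\in E^g(S)$ and $x_\infty$ Mann--Rafi maximal with $y\preccurlyeq x_\infty$. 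Ends above a genus-accumulated end are genus-accumulated, and Theorem~\ref{Radial_Symmetry_Self_Similar_Equivalence} makes $x_\infty$ a star point.
\item Build one block with Lemma~\ref{Lemma_EG_Dilation} realizing $\overline{E_1}\cap E^g(S)\subset\overline{E_1}$, with the half-plane $H$ lying in the end corresponding to $x_\infty$.
\item Put the infinitely many gluing slits $s(g_i,n)$, $t(g_i,n)$ in $H$, escaping to that end.
\end{enumerate}
Then all the ends $x_{\infty,g}$ fuse into a single end $y_\infty$. This end is forced to be accumulated by genus, which is exactly why $x_\infty$ must lie in $E^g(S)$. No new ends appear. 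An enumeration $G=\{g_n\}$, which uses that $G$ is infinite, exhibits $E(M_r)$ with the star decomposition of $E(S)$, and Theorem~\ref{Theorem classification of surfaces} gives $M_r\cong S$.

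\paragraph{Second gap: rigidity and the $r$-invariant.} The rigidity step is only announced. That $f$ carries the copy at $\mathrm{Id}$ to the copy at some $h$ \emph{with derivative $h$} is precisely what has to be shown. The paper needs no generator markers for this. Each block carries a single cone point $\delta_{Id}$ of angle $10\pi$, made by removing an octagon from $H$ and gluing parallel sides. For $T\in\Aff(M_r)$ with $T(\delta_{Id})=\delta_g$, the map $T_g^{-1}\circ T$ fixes $\delta_{Id}$. The paper argues that it then fixes the saddle connections from $\delta_{Id}$ to itself, hence is a dilation fixing an octagon side pointwise, hence is the identity by analytic continuation.

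Your $r$-invariant also needs care. Cylinder multipliers are defined only up to inversion, and your markers and gadget Hopf tori carry multipliers as well, so you must ensure $r^{\pm1}$ occurs nowhere else. The paper instead uses the only three cone points of angle $2\pi$, with holonomies $r,\frac{1+r}{2r},\frac{2}{1+r}$, of which only $r$ lies in $(0,1)$.

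\paragraph{The horizon argument.} As written it does not work. Leaves of direction $d$ never cross $\gamma_d$, so trapping them is irrelevant. A small affine cylinder of angle $<\pi$ does not trap every transverse direction: leaves whose angle with $\gamma_d$ lies between the cylinder's angle and $\pi$ pass through it and may return through the rest of $M_r$.

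What is genuinely local is a Hopf torus attached along a slit in a radial segment of direction $d$. A transverse leaf crossing the slit develops to a line meeting $\mathbb{R}e^{id}$ only once, so it stays in the torus forever. Hence $\gamma_d$ is a $1$-horizon, which is already a $k_d$-horizon because the definition only asks $|\gamma\cap\alpha|\le k$; no tuning toward $k_d$ is needed.

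The paper instead glues the gadget of Lemma~\ref{Construction of k horizon saddle connections} into $H$ through a radial slit of a Hopf torus diametrically opposite to the gadget's own slit. This isolates the gadget from the rest of $M_r$.
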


Theorem~\ref{Main Theorem 1} establishes a realization result for infinite countable subgroups of $\SL(2,\mathbb{R})$ as the Veech group of a dilation surface with infinite-genus and self-similar end space. On the other hand, Theorem~\ref{Main Theorem 2} shows that, by changing the topology of the surface, we can also realize finite subgroups of $\SL(2,\mathbb{R})$ as Veech groups. In both cases, the groups can be realized as the Veech groups of an infinite-type dilation surface with horizon saddle connections appearing in infinitely many directions, contrasting with the results of Tahar~\cite{Tahar21} for closed surfaces.

\begin{theorem}\label{Main Theorem 2}
    Let $S$ be a topological surface with  $E(S)=E^g(S)$, let $G< \SL(2,\mathbb{R})$ be a finite group and $D\subset \mathbb{S}^1$ a countable set of directions and $\{k_d\}_{d\in D}$ a sequence of positive integer. Then, there exists an uncountable family of non-isomorphic dilation surfaces $\{M_r\}_{r\in(0,1)}$ such that for each $r\in(0,1)$, $M_r$ is homeomorphic to $S$ and the following statements hold

    \begin{enumerate}
        \item $\Gamma(M_r)=G$.
        \item  $\Aff(M_r) \cong G$.
        \item If $D\neq \emptyset$, then for every $d\in D$ there exists  a $k_d$-horizon saddle connection $\gamma_d\subset M_r$ with direction $d$.
    \end{enumerate}
\end{theorem}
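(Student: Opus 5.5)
The plan is to adapt the Artigiani--Randecker--Sadanand--Valdez--Weitze-Schmith\"usen construction from \cite{AMRSVW23} to the dilation setting, with one change: since $G$ is finite, we do not use the Cayley graph of an infinite group, whose ends force a self-similar end space. Instead we build a $G$-symmetric ``core'' and attach to it an arbitrary collection of genus-carrying ends.

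\textbf{Step 1: the $G$-symmetric base.} A finite subgroup of $\SL(2,\mathbb{R})$ is conjugate to a finite cyclic rotation group. After conjugating $G$ to a group of rotations $\langle R_{2\pi/n}\rangle$ and conjugating back at the end, we may assume $G$ acts by rotations. We start from a polygon $P$ (or a finite union of slit planes) that is invariant under this rotation group. We then glue $G$-orbits of decorations onto it:
\begin{enumerate}
\item[(a)] For each $d\in D$, we add the $G$-orbit of a dilation cylinder or an affine ``Hopf-type'' gadget containing a saddle connection $\gamma_d$ in direction $d$. The gadget's dilation ratio is chosen so that every leaf of every directional foliation crosses $\gamma_d$ at most $k_d$ times. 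These are the standard local models of horizon saddle connections as in \cite{Tahar21}. Since $D$ is countable, the gadgets can be placed along a sequence of disjoint slits accumulating only at infinity.
\item[(b)] We add a marking gadget that carries the parameter $r\in(0,1)$, for example a slit whose length ratio or dilation factor equals $r$. Its $G$-orbit carries the same value $r$.
\end{enumerate}

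\textbf{Step 2: topology.} Because $E(S)=E^g(S)$, Richards' classification reduces matching the topology to realizing a prescribed closed subset $E\subset$ Cantor set in which every end is accumulated by genus. We write $E$ as an inverse limit and realize it by a tree of slit-glued copies of a genus-one dilation torus piece. This is done $G$-equivariantly: we take one copy of the tree attached in a fundamental domain and its rotated images. The problem is that a single $G$-orbit of ends may not reproduce $E$ exactly. To handle this, we attach one tree realizing all of $E$ to a fixed point of the rotation (the center of $P$), using a $G$-invariant tree in which the branching near the root is $G$-symmetric. Alternatively, we split $E$ as $E_0\sqcup G\cdot E_1$ with $E_0$ $G$-invariant. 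Since $E$ is homeomorphic to a compact subset of a Cantor set, it can always be written this way using clopen decompositions, and we handle the case where $E$ is a single point separately (a $G$-symmetric Loch Ness monster). Throughout, the pieces of (a) sit on genus-carrying subsurfaces, so they do not create planar ends.

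\textbf{Step 3: Veech group and affine group.} By construction, each rotation in $G$ acts as an affine automorphism, so $G\subseteq\Gamma(M_r)$. For the reverse inclusion we use rigidity markers: distinguished saddle connections or cylinders of unique moduli and lengths, or singularities with unique cone angles. These must be mapped to one another by any affine map. We pick them so that their configuration has symmetry group exactly $G$ and no nontrivial linear map preserves their holonomy vectors up to the dilation ambiguity. Then $\Gamma(M_r)\subseteq G$. Kernel triviality (no nontrivial affine map with trivial derivative), which gives $\Aff(M_r)\cong G$, follows because such a map fixes the markers and hence the developing data.

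\textbf{Step 4: non-isomorphism.} The marker value $r$ is an invariant of the dilation structure up to isomorphism: a ratio of holonomies that is invariant under global rescaling within a chart. Hence distinct $r$ give non-isomorphic surfaces.

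\textbf{Main obstacle.} The hardest step is the rigidity in Step 3 for dilation surfaces. Lengths are only defined up to local scaling, so the markers must use scale-invariant data: ratios of holonomy vectors of saddle connections in a common chart, dilation factors of cylinders, and cone angles. We also have to ensure that the infinitely many horizon gadgets and the end trees introduce no extra affine symmetries. The approach I would try first is to give each family of gadgets distinct, rapidly growing cone angles, $2\pi m_j$ with all $m_j$ distinct. Any affine automorphism must then preserve each gadget's singularity, which forces it to preserve the whole $G$-symmetric skeleton.
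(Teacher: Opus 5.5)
\textbf{The gap is in Step 2.} It never produces a surface with an \emph{arbitrary} end space $E$ on which $G$ acts, and that is the heart of the theorem.

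\begin{enumerate}
\item Attaching a tree in a fundamental domain together with its rotated images gives $|G|$ disjoint copies of one end set, permuted freely by $G$. That is not a general $E$.
\item ``Attaching one tree at the fixed point'' is not a construction. Pieces are attached along slits, and a single slit is invariant at most under the rotation by $\pi$. If instead the branching near the root is $G$-symmetric, then $G$ permutes subtrees, hence ends, and you are back to end spaces with a built-in free $G$-symmetry.
\item The splitting $E=E_0\sqcup G\cdot E_1$ is always available with $E_1=\emptyset$, but it only relocates the problem. You still have to realize an essentially arbitrary closed $E_0$ by a surface on which $G$ acts, in general fixing every end. You do this only when $E_0$ is a point.
\end{enumerate}

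Already for $E$ equal to two points and $|G|=3$ there is no free orbit of clopen pieces, and nothing in the proposal builds the required $G$-symmetric Jacob's ladder.

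\textbf{The missing idea is to make $G$ act trivially on the end space.} The paper does this without any $G$-symmetric core.
\begin{enumerate}
\item It builds one surface $M_{Id}$ from planes $P_j^{Id}$, one for each one-ended planar piece of a decomposition of $\mathbb{S}^2\setminus E$, glued along slits.
\item It decorates one plane with three things: the horizon gadgets, the three $2\pi$-singularities of holonomy $r,\frac{1+r}{2r},\frac{2}{1+r}$, and an octagon singularity $\delta_{Id}$ of angle $10\pi$.
\item It takes the $|G|$ affine images $M_g=g\cdot M_{Id}$ and glues $P_j^g$ to $P_j^{gg_l}$ along infinite, non-accumulating families of parallel slits.
\end{enumerate}
Since $G$ is finite, the $|G|$ copies of each end fuse into a single end. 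Hence $E(M_r)=E$ with every end accumulated by genus, while $T_g\colon M_h\to M_{hg}$ simply permutes the copies. No symmetry of $E$ is needed.

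Within your framework, the analogous repair has two parts:
\begin{itemize}
\item Make every vertex piece of the tree individually $G$-invariant. Hopf tori work, since $z\mapsto e^{i\theta}z$ descends to $\mathbb{C}^*/(z\sim\lambda z)$ for every $\theta$.
\item Realize each edge of the tree by a full $G$-orbit of slit pairs.
\end{itemize}
Then $G$ preserves every piece and every end.

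\textbf{Step 3 is only a list of requirements.} The paper's concrete version runs as follows. Any $T\in\Aff(M_r)$ sends $\delta_{Id}$ to some $\delta_g$, since these are the only singularities of angle $10\pi$. So $T_g^{-1}\circ T$ fixes $\delta_{Id}$ and its self-saddle connections, the octagon sides. It is therefore a dilation fixing a segment pointwise, and hence the identity by the identity principle. Your markers must play this role. In your rotation model they must also lie off the fixed points, so that their $G$-orbits are free and $T(\delta)=g\delta$ determines $g$.
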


Finally, while Theorems~\ref{Main Theorem 1} and~\ref{Main Theorem 2} 
show that any countable subgroup of $\SL(2,\mathbb{R})$ can appear as the Veech group of an infinite-type dilation surface with 
particular topological properties, the next result shows that any infinite-type topological surface admits a dilation structure whose Veech group is uncountable.

    \begin{theorem}\label{Main Theorem 3}
    Let $S$ be an infinite-type topological surface and $$G:= \left\{\begin{pmatrix}
                a & b \\
                0 & a^{-1} 
                \end{pmatrix}: a\in \mathbb{R}^+, b\in \mathbb{R}\right\}.$$ Then  there exists an uncountable family of non-isomorphic dilation surfaces $\{M_r\}_{r\in(0,1)}$  such that for all $r\in(0,1)$, $M_r$ is homeomorphic to $S$ and $\Gamma(M_r)=G$ .

\end{theorem}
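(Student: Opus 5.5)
The plan is to build, for every infinite-type $S$, a dilation surface homeomorphic to $S$ that is made out of one basic block whose Veech group is exactly the upper triangular group $G$. The natural block is a dilation cylinder: the quotient of a sector (or an angular region) of $\mathbb{C}^*$ by $z\mapsto \lambda z$ with $\lambda>1$. The affine maps of such a cylinder include the maps $z\mapsto Az$ for every $A\in G$ that preserves the region. More usefully, there is a half-plane model: take the upper half-plane $\mathbb{H}=\{\operatorname{Im} z>0\}$, or a horizontal strip, glued by the dilation $z\mapsto \lambda z$. Every matrix $\begin{pmatrix} a & b\\ 0& a^{-1}\end{pmatrix}$ with $a>0$ preserves $\mathbb{H}$, the real line and horizontal lines, and it commutes with the real scalar $\lambda$. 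Hence every element of $G$ descends to an affine automorphism of the quotient $C_\lambda=\mathbb{H}/\langle z\mapsto\lambda z\rangle$, which is a dilation annulus. So $G\subset \Gamma(C_\lambda)$, and the parameter $\lambda=\lambda(r)$ (for instance $\lambda=1+r$ or $e^{1/r}$) will give the uncountable family.

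Next I realize the topology of $S$ by gluing. Using the Kerékjártó--Richards classification, I choose a model of $S$ as a tree-like union of pairs of pants and handles indexed by the end space with its genus subspace. Geometrically, I cut $C_\lambda$ along horizontal segments (slits) on the line $\operatorname{Im} z=1$ and glue copies of $C_\lambda$ along these slits. Each slit is a horizontal saddle connection, and cut-and-glue along horizontal slits between copies is compatible with every element of $G$ provided the whole configuration is invariant. The issue is that a general $b\neq 0$ moves a slit $[x,y]+i$ to a sheared one, so individual slits are not preserved. To avoid this I let the slits be full horizontal lines, or closed horizontal loops: in $C_\lambda$ the images of $\operatorname{Im} z = c$ under the scaling do not close up. Instead I use a strip model $\{0<\operatorname{Im} z<1\}/\langle z\mapsto \lambda z\rangle$, which is not dilation-invariant, so that does not work either. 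Better: I glue along the boundary ray structure. I take $C_\lambda$ to be a sector $\{0<\arg z<\pi\}$ modulo $\lambda$, whose boundary consists of two closed geodesics, the images of $\mathbb{R}_{>0}$ and $\mathbb{R}_{<0}$. These are preserved by all of $G$, since $G$ fixes the real axis setwise and preserves the sign of the real part on it ($a>0$). Gluing copies of this annulus along their boundary geodesics, by maps commuting with the action, and then adding handles and punctures via slit constructions placed $G$-equivariantly, I get a dilation surface $M_r$ homeomorphic to $S$ with $G\subset\Gamma(M_r)$. Because singular slits would break invariance under shears, I plan to create genus and ends by gluing along boundary curves with distinct twists, and punctures by removing the points $0$ or $\infty$ type ends. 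This needs care, and it is where a different block, such as a $G$-invariant family of horizontal half-planes glued along the whole real line in a tree pattern, may be needed.

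For the reverse inclusion $\Gamma(M_r)\subset G$, I follow the strategy of \cite{DFG19}: the horizontal direction is distinguished, for example as the unique direction of closed geodesic boundaries or of the invariant foliation accumulating on them. Any affine automorphism must preserve it, so its derivative is upper triangular, and after normalizing to determinant one it lies in $G$. Non-isomorphism for distinct $r$ comes from the dilation factor $\lambda(r)$ of the closed geodesics, which is an affine invariant.

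The main obstacle is the topological step: producing arbitrary infinite-type topology, including genus and a general end space, while keeping the full shear group acting. Slit gluings break the shears, so all gluings must be along $G$-invariant curves, namely horizontal closed geodesics of dilation type.
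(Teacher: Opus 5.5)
\paragraph{The gap.} Your proposal has a genuine gap at the step you yourself flag: you never build a surface homeomorphic to an arbitrary infinite-type $S$ on which all of $G$ acts. The reason you give for the difficulty is also mistaken.

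You tested a slit on $\operatorname{Im} z=1$, which a shear does move. But the derivative of an affine map of a dilation surface is only defined up to a positive scalar. So $\begin{pmatrix} a & b\\ 0 & a^{-1}\end{pmatrix}$ is equally realized by $(x,y)\mapsto (x+a^{-1}b\,y,\ a^{-2}y)$, whose linear part is $a^{-1}$ times that matrix. These maps fix the real axis pointwise and preserve the upper and lower half-planes. Now suppose the surface is made of copies of $\mathbb{R}^2$ glued, by translations or dilations, along slits that all lie on the horizontal axis of their planes. Then applying such a map in every plane simultaneously respects every gluing, because it is the identity on each slit.

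This is the mechanism behind the paper's proof. It takes the end-grafting surface of Lemma~\ref{Lemma_EG_Dilation} for the pair $E^g(S)\subset E(S)$, which already realizes any end space with genus. All of its slits lie on the $x$-axis, including the segment $[0,2]\times\{0\}$ that is reglued to produce the three angle-$2\pi$ singularities with holonomies $r,\frac{1+r}{2r},\frac{2}{1+r}$. This gives $G\subset\Gamma(M_r)$ immediately. The reverse inclusion comes from all saddle connections being horizontal. Non-isomorphism is inherited from the lemma, since the holonomies of the angle-$2\pi$ singularities are affine invariants.

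\paragraph{Why your substitutes do not work.} Closed dilation annuli glued along boundary geodesics can only be arranged in a chain or a cycle, since each has exactly two boundary curves. So they produce only an open annulus or a torus, whatever twists you choose. Gluing more than two half-planes along the entire real line does not even give a surface.

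Nor can you insert handles, slits or punctures in the interior of your block $G$-equivariantly. The linear action of $G$ on the open upper half-plane is transitive. Hence any $G$-invariant discrete set of singularities, slit endpoints or removed points must lie on the horizontal line fixed by the action, which is the boundary of your block. That observation is exactly what forces, and permits, the paper's design of putting every slit on one horizontal line inside full planes. As it stands, your reverse inclusion and your invariant $\lambda(r)$ refer to a surface that has not been constructed.

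Separately, preserving the horizontal direction only gives an upper triangular matrix with $a\in\mathbb{R}\setminus\{0\}$. Normalizing by a positive scalar cannot change the sign of $a$. So concluding that the derivative lies in $G$ needs an extra argument that no affine automorphism reverses the orientation of the horizontal direction.
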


\section*{Acknowledgements}
This work was supported by SECIHTI, UNAM-PAPIIT IN106925, and IRL 2001 CNRS-UNAM (Solomon Lefschetz). The author thanks the Institut de Math\'ematiques de Toulouse for hospitality and accommodation during a research visit, and Ferr\'an Valdez for invaluable discussions and advice.

During the preparation of this work, the author used Gemini (Google) 
for typesetting assistance and English language editing (grammar, punctuation,  and readability). The author reviewed and verified all content and takes full responsibility for the final mathematical results and formulations.

\section{Topological preliminaries.}

In this section we discuss some topological preliminaries that will be necessary for the language used in the statement of the main theorems and their proofs: The space of ends of a surface, the Mann-Rafi order, self-similarity and radial symmetry.

\subsection{The space of ends.}

The space of ends of a surface $S$, denoted $E(S)$, encodes the different ways in which one can approach infinity within the surface. There is a distinguished class of ends consisting of those that can be approached through arbitrary large genus. Such an end is called   \textit{accumulated by genus}, denote by $E^g(S)$ the subspace of $E(S)$ that consists of all the ends that are accumulated by genus.

$E^g(S)$ is a closed subset of $E(S)$ and plays an important role in the topological classification of surfaces. For this reason, we will think of the space of ends of $S$ as a pair of nested spaces $E^g(S)\subset E(S)$. The following theorem is a combination of the results of Ker\'ekj\'art\'o and Richards, see \cite{Richards}.

 \begin{theorem}\label{Theorem classification of surfaces}
 Let $S_1$ and $S_2$ be orientable  surfaces with empty boundary. Then $S_1$ and $S_2$ are homeomorphic if and only if both have the same genus and  $E^g(S_1)\subset E(S_1)$ is homeomorphic as  nested spaces \footnote{That is, there exists a homeomorphism $f: E(S_1)\to E(S_2)$ such that its restriction $f|_{E^g(S_1)}: E^g(S_1)\to E^g(S_2)$ is a homeomorphism.} to $E^g(S_2)\subset E(S_2)$. Moreover, given $g\in \mathbb{N}_0\cup\{\infty\}$ and $A\subset B$ closed subsets of the Cantor set, there exists a surface $S$ with genus $g$ and space of ends homeomorphic to $A\subset B$.
\end{theorem}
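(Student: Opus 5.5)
This is the classical theorem of Kerékjártó and Richards, and I would follow Richards' original strategy: prove the two directions of the classification and the realization statement separately, all resting on the finite-type classification together with an exhaustion argument. Necessity is the easy part. A homeomorphism $S_1\to S_2$ carries compact sets to compact sets, so it induces a bijection between the complementary components of compact subsurfaces. Hence it induces a homeomorphism $E(S_1)\to E(S_2)$, where the topology on ends is the one defined by these complementary components. Genus is a topological invariant and can be read off from compact subsurfaces. It follows that an end is accumulated by genus if and only if its image is, so the induced map preserves the nested pair.

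For sufficiency, I would choose exhaustions $K_1\subset K_2\subset\cdots$ of $S_1$ and $L_1\subset L_2\subset\cdots$ of $S_2$ by compact bordered subsurfaces with the following properties:
\begin{itemize}
\item every complementary component is unbounded and has a single boundary curve;
\item each component of $S_i\setminus K_n$ (respectively $S_i\setminus L_n$) has either genus $0$ or infinite genus.
\end{itemize}
The second property can be arranged by absorbing finite genus into the compact piece. Each complementary component $U$ determines a clopen set $\hat U$ of ends. It also records whether $\hat U$ meets $E^g$, which holds exactly when $U$ has infinite genus. Using the given homeomorphism $f$ of nested end spaces and compactness of $E(S_i)$, I would pass to subsequences and modify the exhaustions so that, at each stage $n$, the clopen partitions of $E(S_1)$ and $E(S_2)$ correspond under $f$. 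I would also arrange that $K_n$ and $L_n$ have the same genus and the same number of boundary curves; this is possible because the total genera agree. The finite-type classification then gives homeomorphisms $K_{n+1}\setminus \mathrm{int}\,K_n\cong L_{n+1}\setminus \mathrm{int}\,L_n$ matching boundary curves in the prescribed way. Since any orientation-preserving homeomorphism of a circle extends over an annulus collar, these maps can be chosen to agree on the common boundaries. Gluing them yields a homeomorphism $S_1\to S_2$.

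I expect the main obstacle to be this inductive step. Refining one exhaustion so that its partition matches the other's requires that the pieces of the bordered subsurfaces be chosen compatibly with $f$: a partition of a clopen set of ends into smaller clopen sets must be realizable by cutting along curves inside the corresponding complementary component. I would handle it by a back-and-forth interleaving of the two exhaustions, alternately refining on each side.

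For the realization statement, I would embed the Cantor set in $\mathbb{S}^2$ and let $B\subset$ Cantor set be closed. Then $\mathbb{S}^2\setminus B$ has end space $B$. Next, choose a sequence of disjoint disks in $\mathbb{S}^2\setminus B$ whose accumulation set is exactly $A$, which is possible since $A$ is closed. Replacing each disk with a handle produces ends accumulated by genus exactly at $A$. If the genus $g$ is finite, $A$ must be empty, and I would add $g$ handles instead.
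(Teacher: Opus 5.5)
Your route is Richards' argument. That is what the paper relies on, since it cites Ker\'ekj\'art\'o and Richards and gives no proof of its own. The necessity direction and your handle construction are fine. The step that fails as written is the inductive gluing: equal total genus and equal boundary count for $K_n$ and $L_n$ do not give a homeomorphism $K_{n+1}\setminus\mathrm{int}\,K_n\to L_{n+1}\setminus\mathrm{int}\,L_n$ with the prescribed boundary matching. That region is a disjoint union of connected pieces $K_{n+1}\cap\overline{U}$, one for each complementary component $U$ of $K_n$. The prescribed matching forces the piece over $U$ onto the piece over the corresponding $V$, so you need $\mathrm{genus}(K_{n+1}\cap\overline{U})=\mathrm{genus}(L_{n+1}\cap\overline{V})$ for every corresponding pair. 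Equal totals are not enough: a single new handle could sit over $U_1$ on one side and over $V_2$ on the other. The per-pair condition is arrangeable. If $U$ is planar, then so is $V$, and both pieces have genus $0$. If $U$ has infinite genus, some complementary component $W$ of $K_{n+1}$ inside $U$ has infinite genus, as does its partner inside $V$. Absorbing into the compact piece a collar of $\partial W$ together with a one-holed torus in $W$ joined to it by a band raises the genus of the piece by one without changing any end set; repeat on the side with less genus.

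Second, the back-and-forth only organizes the alternation. At every stage it presupposes the lemma you flag but do not prove: if $K$ is a compact subsurface and $\mathcal{Q}$ is a clopen partition of $E(S)$ refining $\{\hat U\}$, then some compact $K'\supset K$ has complementary components realizing exactly $\mathcal{Q}$. You can prove it in two moves. First go deep enough in an exhaustion starting at $K$ that each complementary end set lies in a single member of $\mathcal{Q}$; this uses compactness of $E(S)$ and the fact that the sets $\hat W$ form a basis. Then merge the complementary components whose end sets lie in the same member of $\mathcal{Q}$. They all lie in one $U$, since $\mathcal{Q}$ refines $\{\hat U\}$. Merge them by deleting from $K'\cap\overline{U}$ thin bands along pairwise disjoint arcs that join their boundary curves and have interiors away from $\partial U$. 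Deleting a band between two distinct boundary curves keeps the piece connected, so the arcs can be chosen one at a time. It also leaves the genus unchanged, and the merged component again has a single boundary curve. Finally, the realization clause as printed needs the hypothesis $g=\infty\iff A\neq\emptyset$. You noted that finite $g$ forces $A=\emptyset$. The converse also holds: an infinite-genus surface has $E^g\neq\emptyset$, since otherwise compactness of the end space confines all the genus to a compact set. Indeed, your disk construction cannot be run with $g=\infty$ and $A=\emptyset$.
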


In contrast to orientable finite-type surfaces, whose topology is determined by the genus and the number of ends, infinite-type surfaces require specifying the topology of $E^g(S)\subset E(S)$. This gives rise to pretty names for special surfaces. See Figure \ref{Figure_Infinite_type_surfaces}.

\begin{figure}
  
    \begin{subfigure}[t]{0.32\textwidth}
        \centering
        \includegraphics[height=2cm]{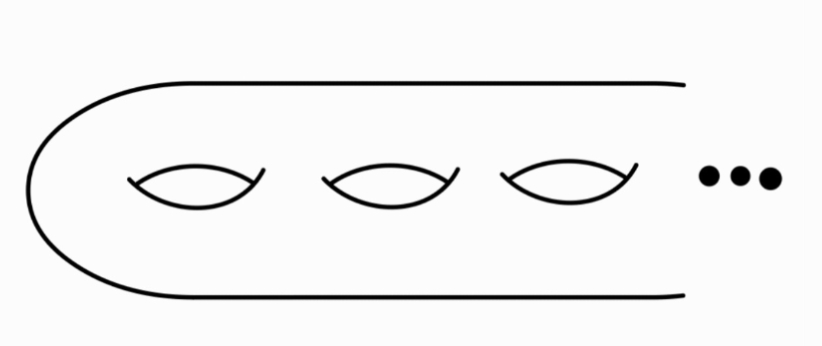}
        \caption{The Loch Ness monster, is the surface with infinite genus and exactly one end.}
        \label{fig:sub1}
    \end{subfigure}
    \hfill
    \begin{subfigure}[t]{0.32\textwidth}
        \centering
        \includegraphics[height=4cm]{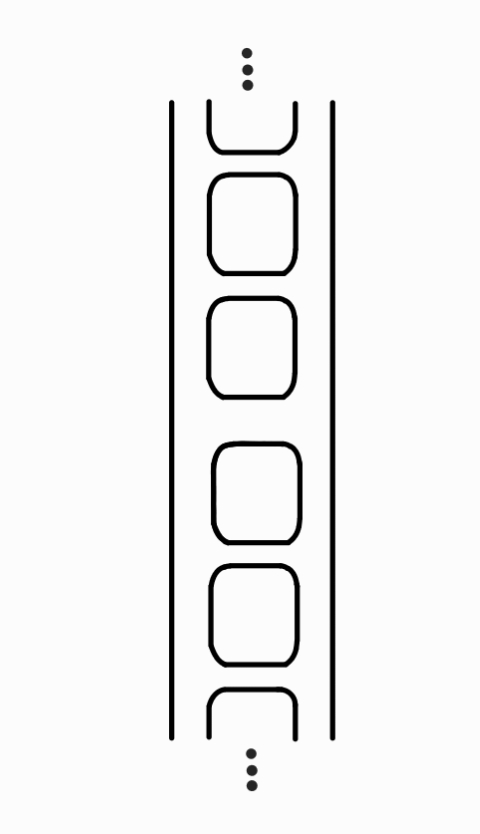}
        \caption{The Jacob´s ladder, is the surface with infinite genus and exactly two ends, both accumulated by genus.}
        \label{fig:sub2}
    \end{subfigure}
    \hfill
    \begin{subfigure}[t]{0.32\textwidth}
        \centering
        \includegraphics[height=4cm]{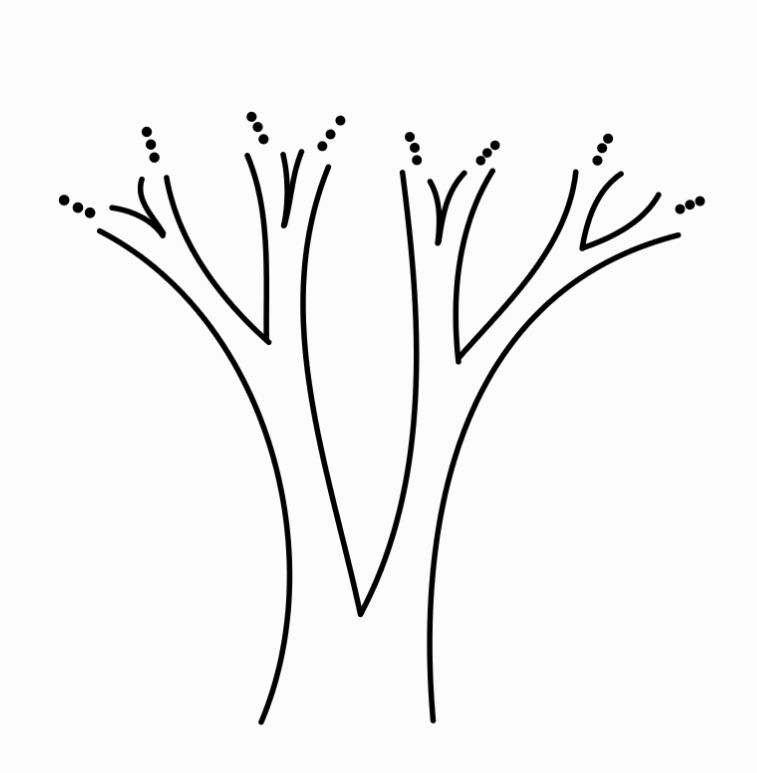}
        \caption{The Cantor Tree, is the surface with genus zero and space of ends homeomorphic to the Cantor space.}
        \label{fig:sub3}
    \end{subfigure}
  \caption{Some examples of infinite-type surfaces: (a) the Loch Ness monster, is the surface with infinite genus and exactly one end, (b) the Jacob´s ladder, is the surface with infinite genus and exactly two ends, both accumulated by genus (c) the Cantor Tree, is the surface with genus zero and space of ends homeomorphic to the Cantor space.}
  \label{Figure_Infinite_type_surfaces}
\end{figure}

\subsection{The Mann-Rafi order.}

There is a way to endow the space of ends (modulo an equivalence relation) of an infinite-type surface with a partial order based on the local complexity of the ends. This order was introduced by Mann and  Rafi in \cite{MR23}.\\

Let $S$ be a surface, a homeomorphism  $f\in\Homeo(S)$ induces a homeomorphism $f^*\in \Homeo(E(S))$. Given $x,y\in E(S)$, we say that $x\preccurlyeq y$ if for every neighborhood $V$ of $y$, there exist a neighborhood $U$ of $x$ and $f\in \Homeo(S)$ such that $f^*(U)\subset V$. We write $x\sim y$ if $x\preccurlyeq y$ and $y\preccurlyeq x$. Let us define $$
\mathcal{E}(x):=\{y\in E(S): x\sim y\}, \qquad
\mathcal{E}(S):=\{\mathcal{E}(x): x\in E(S)\}.
$$

The relation $\prec $ on $E(S)$, defined by $x\preccurlyeq y$ and $x\not\sim y$, induces a partial order on $\mathcal{E}(S)$, called the \textbf{Mann-Rafi order}. As Mann and Rafi shown in 
{\cite[Proposition~4.7]{MR23}}, this order has maximal elements.

\subsection{Self-similarity and radial symmetry}

We now describe the class of infinite-type surfaces to which the main theorems apply. These are surfaces whose space of ends exhibits a fractal-like behavior.  Formally, the space of ends satisfies two equivalent notions: self-similarity and radial symmetry.

\begin{definition}
    Let $S$ be a surface. We say that the space of ends of $S$ is \textbf{self-similar} if for every decomposition   $E(S)=E_1\sqcup E_2$ into disjoint clopen sets, there exist $i\in \{1,2\}$ and $A\subset E_i$ such that $A\cap E^g(S)\subset A$ is homeomorphic as nested spaces to $E^g(S)\subset E(S)$.
\end{definition}

The Loch Ness Monster and the Cantor Tree have a self-similar space of ends, while Jacob´s ladder does not (see Figure \ref{Figure_Infinite_type_surfaces}).

Aougab, Patel and Vlamis introduced the notion of \textit{radial symmetry} in \cite{APV21} and proved its equivalence with self-similarity.

\begin{definition}
    Let $S$ be a topological surface. $E(S)$ has \textbf{radial symmetry} if either $E(S)$ consists of only one point, or there exist
    $x_\infty\in E(S)$ and a countable collection $\{E_n\}_{n\in\mathbb{N}}$  of pairwise homeomorphic non-compact subsets of $E(S)$ such that $$
E(S)\setminus\{x_\infty\}=\bigsqcup_{n\in\mathbb{N}} E_n
\quad\text{and}\quad
\overline{E_m}\cap\overline{E_n}=\{x_\infty\}\ \text{for } m\neq  n.
$$

    In this case we say that $x_\infty$ is a \textbf{star point} of $E(S)$ and we call the family $\{E_n\}_{n\in\mathbb{N}}$ a \textbf{star decomposition} of $E(S)$ with respect to the star point $x_{\infty}$.
\end{definition}

\begin{theorem}\label{Radial_Symmetry_Self_Similar_Equivalence}{\cite[Theorem~5.2 and Proposition~5.15]{APV21}}
   If  $S$ is an infinite-type surface. Then the following statements hold:

   \begin{enumerate}
       \item $E(S)$ is self-similar if and only if it has radial symmetry.
       \item If $E(S)$ has radial symmetry, then $x\in E(S)$ is a star point if and only if $\mathcal{E}(x)$ is maximal with respect to the Mann-Rafi order.
   \end{enumerate}
\end{theorem}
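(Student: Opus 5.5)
Since this theorem is quoted from \cite{APV21}, I would only reconstruct its proof. Throughout, ``homeomorphism'' means a homeomorphism of nested pairs $E^g\subset E$. I would also use the standard fact from \cite{MR23} that $x\preccurlyeq y$ for end spaces is equivalent to the following: every clopen neighborhood of $y$ contains a homeomorphic copy of some clopen neighborhood of $x$.

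\emph{Radial symmetry $\Rightarrow$ self-similarity.} Let $x_\infty$ be a star point with star decomposition $\{E_n\}$, and let $E(S)=E_1'\sqcup E_2'$ be a clopen decomposition. Suppose $x_\infty\in E_1'$. Since $E_1'$ is an open neighborhood of $x_\infty$ and $\overline{E_m}\cap\overline{E_n}=\{x_\infty\}$, compactness of $E(S)\setminus E_1'$ shows that only finitely many $E_n$ meet $E_2'$. Hence $A:=\{x_\infty\}\cup\bigcup_{n\ge N}E_n\subset E_1'$ for some $N$. The shift $E_n\mapsto E_{n+N-1}$, built from the given homeomorphisms $E_n\cong E_{n+N-1}$ and fixing $x_\infty$, is a bijection $E(S)\to A$. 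It is continuous at $x_\infty$ because the neighborhoods of $x_\infty$ are exactly the sets containing $x_\infty$ and cofinite unions of ``tails'' of the $E_n$. The same argument restricts to $E^g$, which gives self-similarity. This also proves one direction of (2): every neighborhood of a star point contains a copy of all of $E(S)$, so every $y$ satisfies $y\preccurlyeq x_\infty$, and $\mathcal E(x_\infty)$ is maximal.

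\emph{Self-similarity $\Rightarrow$ radial symmetry, and maximal $\Rightarrow$ star.} Fix $x$ with $\mathcal E(x)$ maximal; such $x$ exists by \cite[Proposition~4.7]{MR23}. The first step is to show that every clopen neighborhood $U$ of $x$ contains a copy of $E(S)$. Apply self-similarity to the decomposition $U\sqcup U^c$. If the copy lands in $U$ we are done. If it lands in $U^c$, that copy contains a copy of some point $x'\sim x$. Since $x\preccurlyeq x'$, a neighborhood of $x'$ inside the copy contains a copy of a neighborhood of $x$, and iterating self-similarity within that neighborhood produces a copy of $E(S)$ inside $U$. Consequently every clopen $C\not\ni x$ embeds into every neighborhood of $x$.

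Next, choose a clopen neighborhood basis $U_0=E(S)\supset U_1\supset\cdots$ of $x$, so that $E(S)\setminus\{x\}=\bigsqcup_k A_k$ with compact clopen annuli $A_k$. Using the embedding property, I would refine and redistribute these pieces so that each annulus admits copies of all earlier annuli in later ones. Then I would split the index set into infinitely many infinite subsets $I_n$ and set $E_n=\bigcup_{k\in I_n}A_k$. Each $E_n$ is non-compact, accumulates only at $x$, and $\overline{E_m}\cap\overline{E_n}=\{x\}$.

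\emph{Main obstacle.} The hard step is making the $E_n$ pairwise homeomorphic. I would arrange every $E_n\cup\{x\}$ to be homeomorphic to $E(S)$ via a back-and-forth (absorption) argument: build the homeomorphism annulus by annulus, at each stage embedding the current annulus into a later piece and absorbing the leftover clopen remainder into the next annulus. This is possible because every clopen set avoiding $x$ embeds into every neighborhood of $x$. Once each $E_n\cup\{x\}\cong E(S)$ with $x$ fixed, removing $x$ gives $E_n\cong E_m$, and this yields both (1) and the converse direction of (2). Care is needed to keep the bookkeeping compatible with $E^g$ at every stage.
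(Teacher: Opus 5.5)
The paper gives no proof of this statement; it is quoted from \cite{APV21}, so your outline has to stand on its own. It has a genuine gap at the first step of the converse: the claim that every clopen neighborhood $U$ of a maximal point $x$ contains a copy of $E(S)$.

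Suppose the copy $\phi(E(S))$ given by self-similarity lies in $U^c$. The relation $x\preccurlyeq x'=\phi(x)$ only produces copies of neighborhoods of $x$ near $x'$, that is, inside $U^c$. The relation you actually need is $x'\preccurlyeq x$ (true, since $x'\sim x$). It only tells you that $U$ contains a copy of a small neighborhood of $x'$, hence, via $\phi^{-1}$, of some smaller neighborhood $U_1\subset U$ of $x$. To conclude you would need $U_1$ to contain a copy of $E(S)$, which is the original claim again. So ``iterating self-similarity'' is an infinite regress. Everything after it (copies near $x$ of clopen sets avoiding $x$, the annulus refinement, the back-and-forth) depends on this lemma.

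One way to prove the lemma:
\begin{enumerate}
\item By \cite[Proposition~4.7]{MR23}, $\mathcal{E}(x)$ is finite or a Cantor set for maximal $x$, hence closed, so $\mathcal{E}(x)\setminus U$ is compact.
\item Each point $y$ of $\mathcal{E}(x)\setminus U$ satisfies $y\preccurlyeq x$, so it has a clopen neighborhood in $U^c$ that some homeomorphism carries into $U$. Take finitely many of these, disjointify them to $V_1,\dots,V_m$, and put $R=U^c\setminus\bigcup V_i$.
\item Apply self-similarity to $U\sqcup V_1\sqcup\dots\sqcup V_m\sqcup R$; the two-piece definition implies the finite-piece one by induction.
\item A clopen copy $\psi(E(S))\subset R$ is impossible, because $\psi(x)\succcurlyeq x$ would force $\psi(x)\in\mathcal{E}(x)$. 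A copy inside some $V_i$ is carried into $U$.
\end{enumerate}
You also need copies of a clopen $C\not\ni x$ that avoid $x$. If $x\in\psi(C)$ for a copy $\psi(E(S))\subset U$, use instead $\psi$ applied to a copy of $C$ lying in the neighborhood $E(S)\setminus C$ of $x$. With this lemma in hand, your annulus refinement and back-and-forth go through in outline.

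A second, smaller problem is that you use two notions of ``copy''. The converse needs the copies supplied by self-similarity to be clopen, as in the Mann--Rafi formulation; otherwise $x\preccurlyeq\phi(x)$ fails in general. But the set $A=\{x_\infty\}\cup\bigcup_{n\ge N}E_n$ from your forward direction is not open. Each $E_n$ is non-compact with $\overline{E_n}\subset E_n\cup\{x_\infty\}$, so $x_\infty\in\overline{E_n}$, and every neighborhood of $x_\infty$ meets $E_1,\dots,E_{N-1}$.

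Prove instead that every clopen neighborhood $W$ of $x_\infty$ (for example $E_1'$) is homeomorphic to $E(S)$. The compact remainder $C=E(S)\setminus W$ lies in $E_1\cup\dots\cup E_{N-1}$. Send $C$ to its copy in the block $E_N\sqcup\dots\sqcup E_{2N-2}$, that copy to its copy in the next block, and so on, and take the identity elsewhere. This Hilbert-hotel map is a homeomorphism $E(S)\to W$.

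Finally, your finite-subcover step and the continuity of the shift at $x_\infty$ both use that the $E_n$ are open, so that $E_n\cup\{x_\infty\}$ is the one-point compactification of $E_n$. This openness is omitted from the definition as stated in the paper, and it cannot be dropped. Take $\omega\cdot 2+1$, with limit points $x_\infty$ and $y$. Partition the isolated points accumulating at $x_\infty$ into infinite sets $P_n$, and enumerate those accumulating at $y$ as $z_2,z_3,\dots$. Then $E_1=P_1\cup\{y\}$ and $E_n=P_n\cup\{z_n\}$ for $n\ge 2$ satisfy all the stated conditions, yet $\omega\cdot 2+1$ is not self-similar.
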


\section{Geometrical preliminaries.}

This section aims to provide basic definitions and geometric properties of translation and dilation surfaces, as well as what is known about the Veech groups of dilation surfaces.

\subsection{Dilation surfaces and conic singularities.}

We begin by introducing dilation surfaces and the kind of singularities that we are considering in this . We first define dilation atlases on topological surfaces and then, we describe the local models arising at singular points.

\begin{figure}
  \centering
    \includegraphics[width=0.7\linewidth]{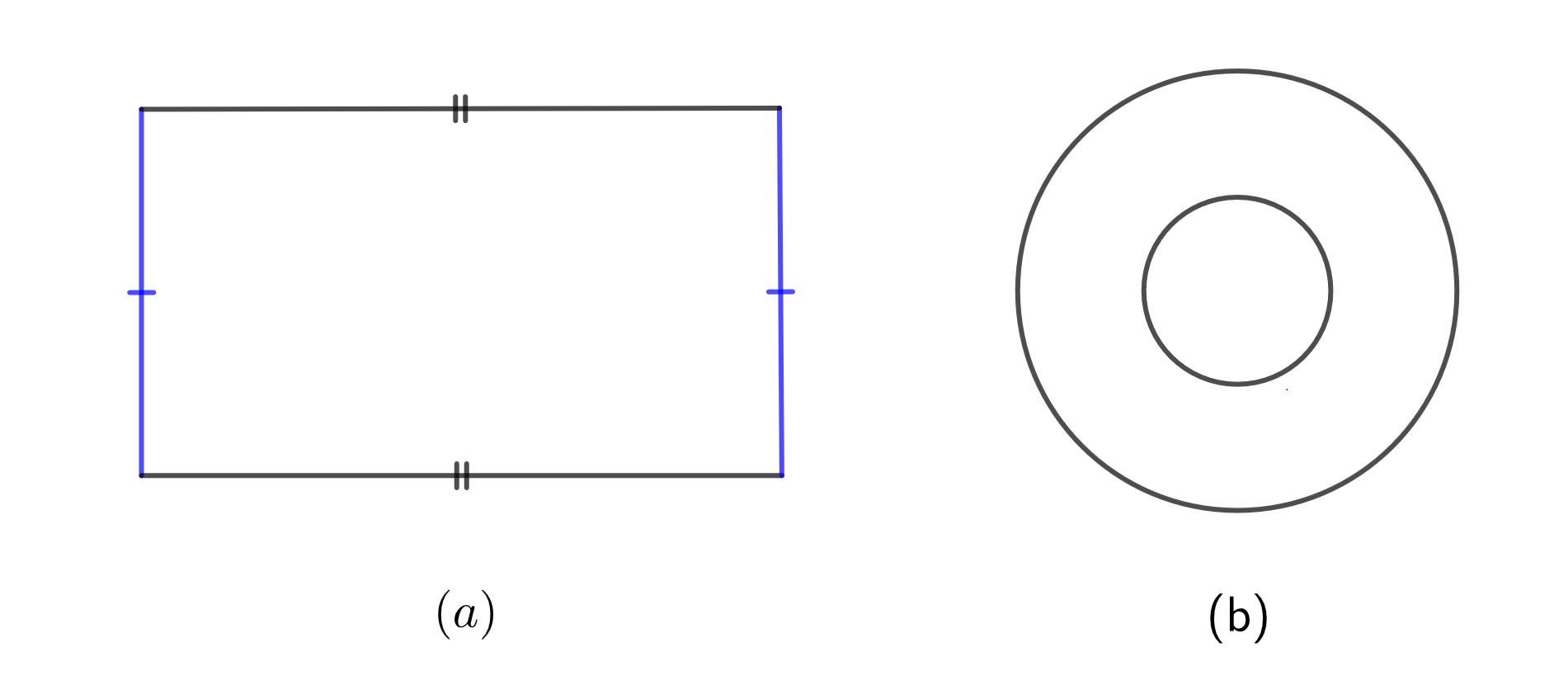}
  \caption{Some quotients that induce dilation atlases on the torus: (a) induces a pure translation atlas, while (b) induces a dilation atlas that is not a translation atlas.}
  \label{Figure_Dilation_Surfaces_Examples}
\end{figure}

\begin{definition}
    Let $S$ be a topological surface. A \textbf{dilation atlas} on $S$ is a complex atlas on $S$ whose transition maps are of the form $z\mapsto r z+v$ with $r\in\mathbb{R}^+$ and $v\in\mathbb{C}$. 
\end{definition}

In Figure \ref{Figure_Dilation_Surfaces_Examples} (a) the identifications in the rectangle by translations endows the Torus with a dilation atlas where all the transition maps are translations. Another dilation atlas in the Torus is induced by the quotient $\mathbb{C}^*/(z\sim r z)$ with $r\in\mathbb{R}^+\setminus\{1\}$ a fixed number. Figure \ref{Figure_Dilation_Surfaces_Examples} (b) shows a fundamental region for this quotient. \\                                                                                        

Note that not every topological surface admits a dilation atlas; for example, the sphere $\mathbb{S}^2$ does not. Moreover, the only closed orientable surface admitting a dilation atlas is the torus. For this reason, we will consider a dilation atlas on the complement of a discrete set of singularities. These singularities can be modeled as singularities on dilation cones.

Let $k\in \mathbb{N}$ and $\lambda\in \mathbb{R}^+$. 
A \textit{dilation cone} of angle $2k\pi$ and holonomy $\lambda$ is obtained by taking a $k$--fold cyclic branched cover of the unit disk $\mathbb{D}$, fully ramified at the origin, and then cutting along a straight half-line issuing from the cone point and gluing the two sides by a dilation of factor $\lambda$. The resulting surface has a single cone point of total angle $2k\pi$ and holonomy $\lambda$. In the special case $\lambda=1$, the construction yields a \textit{flat cone} of angle $2k\pi$.

\begin{definition}
    Let $S$ be a topological surface and $p\in S$. Given a dilation atlas $\mathcal{A}$ on $S\setminus\{p\}$, we say that $p$ is a \textbf{conic singularity} of angle $2k\pi$ and holonomy $\lambda$ with respect to the atlas $\mathcal{A}$ if the following condition holds:
    
        \begin{enumerate}
            \item $k\geq 2$ or, if $k=1$, then $\lambda\neq 1$.

    \item There exists an open neighborhood $U_p$ of $p$ and a homeomorphism
    $f$ from $U_p$ to  a dilation cone of angle $2k\pi$ and holonomy $\lambda$ such that $f(p)$ is the cone point and the restriction
    $f|_{U_p\setminus\{p\}}$ is, in local coordinates, of the form
    $z\mapsto rz+v$ with $r\in\mathbb{R}^+$ and $v\in\mathbb{C}$.
        \end{enumerate}
\end{definition}

We say that a conic singularity has \textit{trivial holonomy} if $\lambda=1$.
Throughout the paper, whenever we refer to a conic singularity of angle $2k\pi$ without specifying its holonomy, we will assume that its holonomy is trivial.

\begin{definition}
    A \textbf{dilation surface} is a triplet $M=(S, \Sigma, \mathcal{A})$ where: 

    \begin{enumerate}
        \item $S$ is a topological surface and $\Sigma\subset S$ is a discrete set.
        \item $\mathcal{A}$ is a maximal dilation atlas on $S\setminus \Sigma$.
        \item Every point $p\in \Sigma$ is a conic singularity with respect to the atlas $\mathcal{A}$.
    \end{enumerate}
\end{definition}

Translation surfaces are examples of dilation surfaces for which all singularities (if any) have trivial holonomy. An example of a dilation surface without singularities is the  Hopf torus $T_r:=\mathbb{C}^*/(z\sim r z)$, where $r\in\mathbb{R}^+\setminus \{1\}$. Below we describe a surgery process for constructing dilation surfaces from existing ones. This process, known as \textit{slit construction}, will play a key role in the proofs of the main theorems.

\begin{figure}[t]
    \centering
    \includegraphics[width=0.7\linewidth]{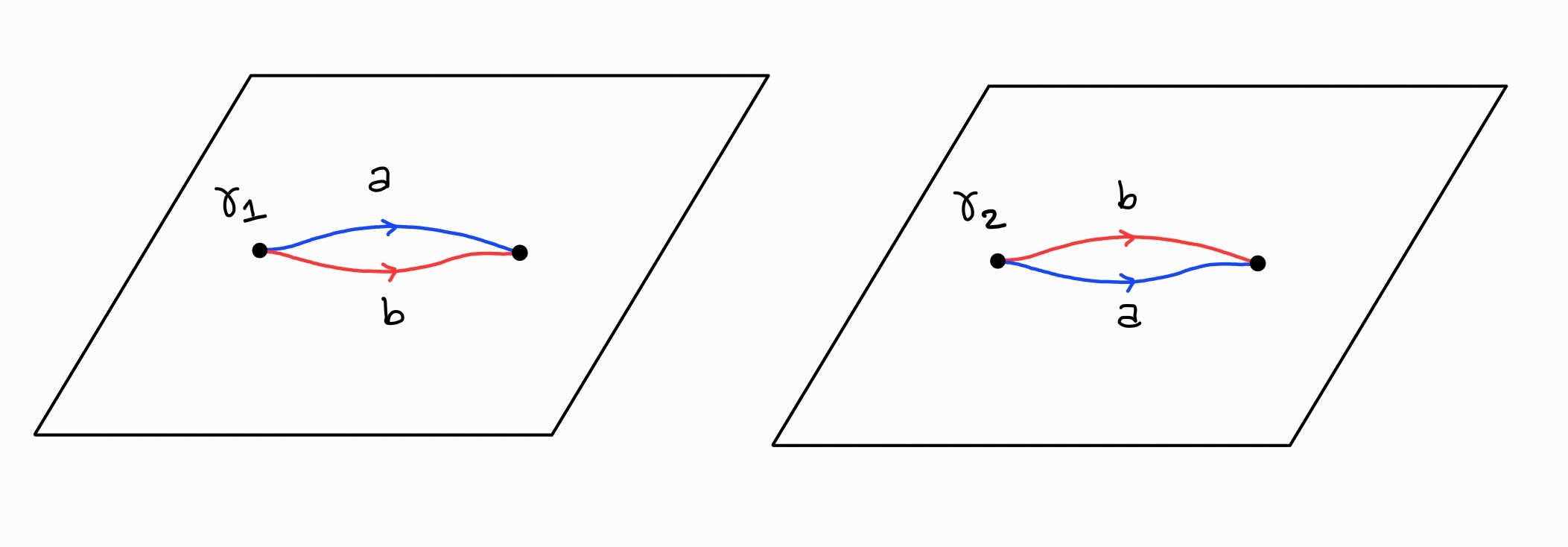}
    \caption{Gluing throughout two slits}
    \label{Figure_Slit_Construction}
\end{figure}

Let $M_1$ and $M_2$ be dilation surfaces with singular sets $\Sigma_1$ and $\Sigma_2$ respectively. Let $\gamma_i\subset M_i\setminus \Sigma_i, i=1,2$, be parallel compact straight segments of finite length. We cut $M_i$ along $\gamma_i$ and glue the left side of $\gamma_1$ with the right  side of $\gamma_2$ and vice versa. We obtain a dilation surface $M$ with singularities $\Sigma_1\cup\Sigma_2\cup \{\sigma_1,\sigma_2\}$where $\sigma_1$ and $\sigma_2$ are conic singularities of angle $4\pi$  corresponding  to the end points of the glued segments. Figure \ref{Figure_Slit_Construction} illustrates this process.\\

It is natural to study the class of maps that respect the dilation structure. This leads to the notion of a dilation and affine homeomorphisms between dilation surfaces, which play a central role in the study of their symmetries and, in particular, in the definition of the Veech group.

\begin{definition}\label{Definition of affine homeo}
    Let $M_i=(S_i, \Sigma_i, \mathcal{A}_i), i\in \{1,2\}$ be dilation surfaces. An \textbf{affine homeomorphism} from $M_1$ to $M_2$ is a homeomorphism $f:S_1\to S_2$ such that $f(\Sigma_1)=\Sigma_2$ and whose restriction $f|_{S_1\setminus \Sigma_1}$ is, in local coordinates given by the form $z\mapsto r A_fz+v$ with $A_f\in \SL(2,\mathbb
    R), r\in \mathbb{R}^+$ and $v\in\mathbb{C}$.

    If $A_f$ is the identity matrix, then we say that $f$ is a \textbf{dilation}. We say that two dilation surfaces $M_1$ and $M_2$ are \textbf{isomorphic} if there exists a dilation from $M_1$ to $M_2$.

    Let $M$ be a dilation surface. Denote by $\Aff(M)$ the group that consists of all the affine homeomorphisms from $M$ to itself. And denote by $\Dil(M)$ the group that consists of all dilations from $M$ to itself. 
\end{definition}

Because of the structural group in a dilation surface, the matrix $A_f$ in Definition \ref{Definition of affine homeo} does not depend on the choice of local charts (changing coordinates can only change the factor $r$).\\

Note that for compact dilation surfaces, the slit construction produces a dilation surface with two extra singularities, in particular the resulting dilation surface is not isomorphic to the initial ones.

\subsection{The linear holonomy.}

Each dilation surface is endowed with a representation of the fundamental group of the underlying surface punctured at the singularities into $\mathbb{R}^+$; this representation is called the \textit{linear holonomy} and measures how the geometric structure is rescaled when traveling along a loop.

Let $M=(S,\Sigma,\mathcal{A})$ be a dilation surface and let $\gamma$ be a simple closed loop in $M\setminus\Sigma$. Cover $\gamma$ with finitely many charts $\{(U_i,\varphi_i)\}_{i=0}^k\subset\mathcal{A}$ such that $U_i\cap U_{i+1}\neq \emptyset$.  Assume that each transition map is of the form $z\mapsto r_i z+v_i$, with $r_i\in\mathbb{R}^+$ and $v_i\in\mathbb{C}$. The \textbf{linear holonomy} of $\gamma$ is defined by
$$
\overline{\rho}(\gamma):=\prod_{i=0}^{k} r_i.
$$

The linear holonomy is independent of the chosen charts and is invariant under free homotopy of loops, (see {\cite[Section~3.4]{Thu97}}) and thus defines a representation of $\pi_1(M\setminus\Sigma)$ into $\mathbb{R}^+$.

\begin{definition}
    Let $M=(S, \Sigma, \mathcal{A})$ be a dilation surface. The \textbf{linear holonomy} of $M$ is the representation \begin{align*}
    \rho_M: \pi_1(S\setminus \Sigma)&\to\mathbb{R}^+\\
    [\gamma]&\mapsto \overline{\rho}(\gamma).
\end{align*}
\end{definition}

Note that if $M$ is a dilation cone of angle $2k\pi$ and holonomy $\lambda$ and $\gamma$ is the generator of the fundamental group of this cone punctured on the cone point, then $\rho_M(\gamma)=\lambda$. Particularly, we can calculate the holonomy of a singularity in a dilation surface through  the linear holonomy.\\

The following lemma follows from the maximality of the dilation atlas and will be used in the proof of the main theorems.

\begin{lemma}\label{Equivalence and linear holonomy}
    Let $M_1=(S_1, \Sigma_1, \mathcal{A}_1)$ and $M_2= (S_2, \Sigma_2, \mathcal{A}_2)$ be dilation surfaces with linear holonomy $\rho_{M_1}$ and $\rho_{M_2}$, respectively. If $f$ is an affine homeomorphism from $M_1$ to $M_2$, then for each $[\gamma]\in \pi_1(M_1\setminus \Sigma_1)$ we have
    $$ \rho_{M_1}([\gamma])=\rho_{M_2}([f(\gamma)]).$$ In particular, the holonomy of a conic singularity is invariant under affine homeomorphisms.
\end{lemma}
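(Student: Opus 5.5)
The plan is to compute the holonomy of $f(\gamma)$ directly in charts adapted to $f$, and to show that every factor in the product defining $\overline{\rho}$ is unchanged. We may assume $\gamma$ is a closed loop in $S_1\setminus\Sigma_1$. Since $f(\Sigma_1)=\Sigma_2$, the image $f(\gamma)$ is a closed loop in $S_2\setminus\Sigma_2$. Moreover, $f$ sends free homotopies in $S_1\setminus\Sigma_1$ to free homotopies in $S_2\setminus\Sigma_2$, so the right-hand side is well defined on classes.

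The first step is to build an atlas on $M_2$ out of the atlas on $M_1$. Cover $\gamma$ by finitely many charts $(U_i,\varphi_i)\in\mathcal{A}_1$, $i=0,\dots,k$, with consecutive overlaps nonempty. Refine them (shrinking, and adding charts if needed) so that on each $U_i$ the map $f$ has a single local expression
$$\psi_i\circ f\circ\varphi_i^{-1}(z)=s_iA_fz+w_i.$$
Here $(V_i,\psi_i)\in\mathcal{A}_2$ with $f(U_i)\subset V_i$, $s_i\in\mathbb{R}^+$, and $w_i\in\mathbb{C}$. Now set
$$\tilde\psi_i:=s_i^{-1}A_f^{-1}(\psi_i-w_i)$$
on $f(U_i)$, so that $\tilde\psi_i\circ f=\varphi_i$.

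The key point is that the charts $\tilde\psi_i$ are compatible with $\mathcal{A}_2$, so by maximality they belong to $\mathcal{A}_2$. The transition map from $\psi_j$ to $\tilde\psi_i$ is $z\mapsto s_i^{-1}A_f^{-1}(rz+v-w_i)$, and this is not of dilation form in general. I would handle this in one of two ways.

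\begin{enumerate}
\item Avoid $\mathcal{A}_2$ altogether. The charts $\{\tilde\psi_i\}$ form a dilation atlas on $f(\bigcup U_i)$, because their transitions equal the $\varphi$-transitions $z\mapsto r_iz+v_i$. The independence of the chart choice (\cite{Thu97}) then applies to the $(\mathbb{R}^+\ltimes\mathbb{C})$-structure that $\mathcal{A}_2$ restricts to.
\item Compare directly with $\psi_i$. The transition $\psi_{i+1}\circ\psi_i^{-1}$ equals the conjugate of $z\mapsto r_iz+v_i$ by the maps $z\mapsto s_iA_fz+w_i$ and $z\mapsto s_{i+1}A_fz+w_{i+1}$. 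Its linear part is $s_{i+1}A_f\,r_i\,s_i^{-1}A_f^{-1}=r_is_{i+1}s_i^{-1}$. The matrix $A_f$ commutes with scalars, so it cancels, and this conjugate is indeed a dilation.
\end{enumerate}

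I will use the second route, since it is cleaner. The linear parts of the transitions for $f(\gamma)$ are $r_is_{i+1}/s_i$, with indices taken cyclically because the loop closes up. Their product telescopes:
$$\prod_{i=0}^k r_i\frac{s_{i+1}}{s_i}=\prod_{i=0}^k r_i.$$
This gives $\rho_{M_2}([f(\gamma)])=\rho_{M_1}([\gamma])$.

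The main obstacle is the bookkeeping around the closing chart. One has to check that the local expression of $f$ can be chosen with a consistent $A_f$. The paper notes that $A_f$ is chart-independent up to the scalar factor, so $A_f$ is the same in every chart and only $s_i,w_i$ vary.

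For the final claim, take $\gamma$ to be a small loop around $p\in\Sigma_1$. Then $f(\gamma)$ is a small loop around $f(p)\in\Sigma_2$, with the same orientation because $f$ preserves orientation ($A_f\in\SL(2,\mathbb{R})$, $s_i>0$). Since the holonomy of a cone point is the linear holonomy of this generator, the two cone points have equal holonomy. The cone angle is preserved as well, although that is not required here.
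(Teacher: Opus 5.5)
Your argument is correct through your second route. It rests on the same fact as the paper's proof, namely that $A_f$ commutes with positive scalars, but the bookkeeping is arranged differently.

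\textbf{The paper's route.} The paper pushes the charts forward and post-composes with $A_f$, setting $\phi_i:=A_f\circ\varphi_i\circ f^{-1}$. These charts lie in $\mathcal{A}_2$ by maximality, since $\psi\circ\phi_i^{-1}(z)=sz+w$ for any $\psi\in\mathcal{A}_2$. Their transitions are $z\mapsto r_iz+A_fv_i$, so the linear factors agree one by one and nothing needs to telescope.

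\textbf{Your route.} You keep arbitrary charts $\psi_i\in\mathcal{A}_2$ in which $f$ reads $s_iA_fz+w_i$. The linear parts then differ from the $r_i$ by the coboundary $s_{i+1}/s_i$, which cancels around the cyclic chain. This avoids appealing to maximality and shows directly that the computation does not depend on the choice of target charts. The cost is the extra care you take to make $f$ single-valued on each connected $U_i$ and to close the chain at $U_0$. You also check that $f$ preserves the orientation of the small loop around a cone point. The paper leaves this implicit, and it matters, since reversing the loop inverts the holonomy.

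\textbf{What to delete.} You should remove the sentence asserting that $\tilde\psi_i=\varphi_i\circ f^{-1}$ is compatible with $\mathcal{A}_2$; you refute it yourself one line later. Indeed $\psi_i\circ\tilde\psi_i^{-1}(z)=s_iA_fz+w_i$ is a dilation only when $A_f=I$.

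Your first route should go for the same reason. The $\tilde\psi_i$ define the pushed-forward structure $f_*\mathcal{A}_1$, not the restriction of $\mathcal{A}_2$. Chart-independence of holonomy only compares charts within a single structure, so that route is a non sequitur as written.

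The one-line repair is exactly the paper's: use $A_f\circ\tilde\psi_i=s_i^{-1}(\psi_i-w_i)$, which is visibly a chart of $\mathcal{A}_2$.
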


\begin{proof}
    Let $\{(U_i, \varphi_i)\}_{i=0}^k\subset \mathcal{A}_1$ be a collection of charts that covers $\gamma$ and such that $U_i\cap U_{i+1}\neq  \emptyset$. Assume that the transition map between $(U_i, \varphi_i)$ and $(U_{i+1}, \varphi_{i+1})$ is of the form $z\mapsto r_iz+v_i$ for some $r_i\in \mathbb{R}^+$ and $v_i\in\mathbb{C}$. 
    
    Let $A_f\in\SL(2, \mathbb{R})$ denote the matrix in Definition \ref{Definition of affine homeo} associated to $f$, then the respective matrix associated to $f^{-1}$ is $A_f^{-1}$. It follows from the maximality of the dilation atlases that if we set $V_i:=f(U_i)$, $ \phi_i:=A_f \circ \varphi_i\circ f^{-1}$, then $(V_i, \phi_i)\in\mathcal{A}_2$. Moreover, the transition map between $(V_i, \phi_i)$ and $(V_{i+1}, \phi_{i+1})$ is of the form $z\mapsto r_i z+ v_i^\prime$ with $v_i^\prime:=A_f\cdot v_i$. Thus, it follows that $$\rho_{M_1}([\gamma])=\prod_{i=0}^{k} r_i=\rho_{M_2}(f([\gamma])).$$
\end{proof}
Lemma~\ref{Equivalence and linear holonomy} shows that linear holonomy is invariant under affine homeomorphisms; however, it does not completely determine the geometric structure of a dilation surface. For instance, every translation surface has trivial linear holonomy.\\

\subsection{Directional foliations.}

Because of the structure group of the dilation atlas, it makes sense to speak about angles and directions on a dilation surface. Thus, for any dilation surface $M$ and any direction $\theta\in \mathbb{S}^1$, there exists a singular foliation $\mathcal{F}_\theta$, called the \textit{directional foliation}, whose leaves are curves of constant direction $\theta$ and whose singularities are precisely the conic singularities of $M$.

\begin{definition}
Let $M$ be a dilation surface with singular set $\Sigma$.
A \textbf{saddle connection} on $M$ is a leaf of a directional
foliation whose endpoints are singularities and whose interior
does not contain singularities.
\end{definition}

By its definition in local coordinates, dilations and affine homeomorphisms preserve some geometric aspects of dilation surfaces: 

\begin{itemize}
    \item Each dilation preserves angles and directions.
    \item Each dilation sends leaves of a foliation $\mathcal{F}_\theta$ in leaves of $\mathcal{F}_\theta$.
    \item Each affine homeomorphism sends (if there exist) conic singularities of angle $2k\pi$ in conic singularities of angle $2k\pi$ and preserves the holonomy.
    \item Each affine homeomorphism sends leaves of a directional foliation $\mathcal{F}_{\theta}$ in leaves of a directional foliation $\mathcal{F}_{\theta^\prime}$. In particular, it sends saddle connections to saddle connections.
\end{itemize}

\subsection{Horizon saddle connections}

 \begin{figure}
     \centering
     \includegraphics[width=0.6\linewidth]{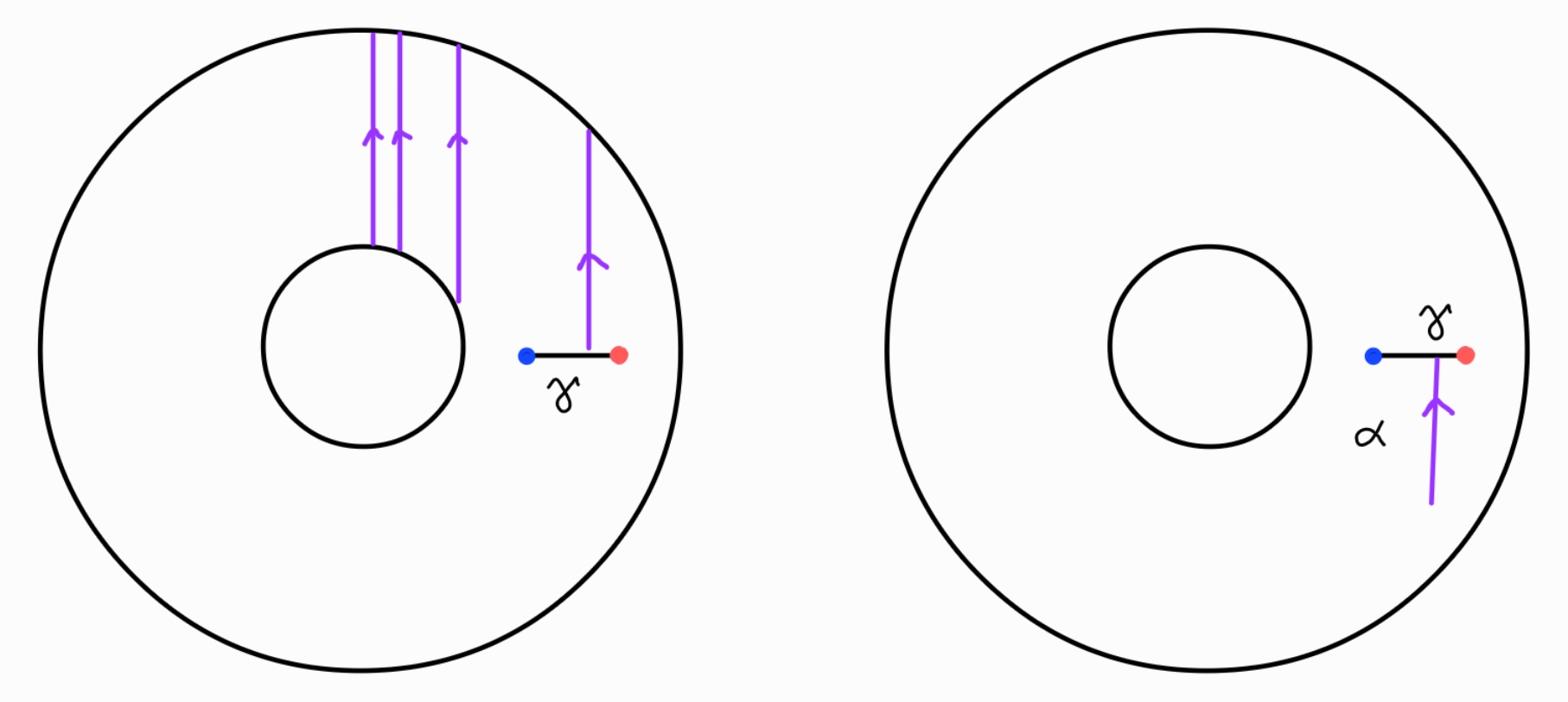}
     \caption{$\gamma$ is an example of a 1-horizon saddle connection; each leaf of a directional foliation intersects $\gamma$ at most once.}
     \label{Figure_Example_Saddle_connection}
 \end{figure}

There is a phenomenon that occurs in some compact dilation surfaces but not in compact translation surfaces:  the existence of a special type of saddle connection that controls the maximum number of intersections with leaves of directional foliations.  This notion was introduced by Tahar in \cite{Tahar21}.

\begin{definition}
    Let $M$ be a dilation surface. A $k$-\textbf{horizon saddle connection} is a saddle connection $\gamma$ on $M$ such that for any $\theta\in[0,2\pi)$, if $\alpha $ is a leaf of $\mathcal{F}_\theta$ transversal to $\gamma$, then $|\gamma\cap \alpha  |\leq k$.
\end{definition}

We now describe a  construction of surfaces with a 1-horizon saddle connection.\\

 Given $r\in\mathbb{R}^+\setminus\{1\}$, let $T_\lambda $ denote the Hopf torus $\mathbb{C}^*/(z\sim r z)$ and let $\rho : \mathbb{C}^*\to T_r $ be the natural covering map. Given $\theta\in [0,2\pi]$, a \textbf{radial segment} in $T_r $ in direction $\theta$ is a connected component of $\rho(\mathbb{R}^*\cdot e^{i\theta})$.

 Now, consider two Hopf tori $T_{r_1}$ and $T_{r_2}$. Let  $\gamma_1\subset T_{r_1}$ and $\gamma_2\subset T_{r_2}$ be parallel straight segments contained in radial segments. Performing the slit construction gluing $\gamma_1\subset T_{r_1}$ and $r\subset T_{r_2}$ we obtain a dilation surface $M$. We claim that the resulting saddle connection $\gamma$ is a $1$-horizon saddle connection. Indeed, if $\alpha$ is a leaf of a directional foliation $\mathcal{F}_\theta$ transversal to $\gamma$ and $\gamma\cap\alpha\neq \emptyset$, then $\alpha$ accumulates within a Hopf torus onto a radial segment of direction $\theta$ (see Figure \ref{Figure_Example_Saddle_connection}).\\

The construction of $k$-horizon saddle connection with $k\geq 2$ is more complicated.  Lemma \ref{Construction of k horizon saddle connections} will give us a method to construct a  dilation surface with a $k$-horizon saddle connection. See Figure \ref{Figure 2-horizon} for an example of a 2-horizon saddle connection.

\begin{lemma}\label{Construction of k horizon saddle connections}
    
         Given a positive integer  $k$, there exists a dilation surface $M$ with the following properties:

    \begin{enumerate}
        \item $M$ has only one conic singularity and it is of angle  $2\pi (10k+15)$.
        \item For each $l\in \{1,\dotsb, k\}$ there exists $\gamma_l\subset M$ a $l$-horizon saddle connection.
        
    \end{enumerate}
\end{lemma}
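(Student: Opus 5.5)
Build $M$ as a finite chain of Hopf tori glued by the slit construction. In the construction described right after the definition of horizon saddle connections, one slit between two Hopf tori along radial segments already yields a $1$-horizon saddle connection. The idea is to stack slits so that a transversal leaf can cross a given saddle connection at most $l$ times before it is trapped in some Hopf torus.

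\textbf{Building block for a single $l$.} Take $l+1$ Hopf tori $T_{r_0},\dots,T_{r_l}$ with pairwise distinct $r_i\neq 1$. Choose parallel radial segments, all in one fixed direction $\theta_0$, in a chain, and perform slit constructions between consecutive tori, so that there are $l$ parallel glued segments stacked along a common radial line. These are arranged so that the concatenation of some of them forms a saddle connection $\gamma_l$.

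\textbf{Bounding intersections.} Let $\alpha$ be a leaf of $\mathcal{F}_\theta$ transversal to $\gamma_l$. After each crossing of $\gamma_l$, the leaf $\alpha$ enters a Hopf torus. In a Hopf torus every non-radial leaf spirals onto a radial segment of direction $\theta$, as in the lifted picture in $\mathbb{C}^*$. A leaf can leave a torus only through one of the slits, and each slit lies on a radial segment of direction $\theta_0$. A monotonicity argument in the universal cover $\mathbb{C}^*$ (the argument of the point along a straight line increases monotonically, with total variation less than $\pi$) then shows that $\alpha$ meets each radial line of direction $\theta_0$ in a given torus at most once. Hence $\alpha$ passes through at most $l$ slit pieces, which gives $|\alpha\cap\gamma_l|\le l$. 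This step is the main obstacle. I would do it by lifting to $\mathbb{C}^*$ torus by torus and tracking the argument of the leaf, and I would also have to check that the bound is sharp by exhibiting a leaf with exactly $l$ crossings.

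\textbf{Joining and the single singularity.} Slit constructions create many $4\pi$ singularities, whereas the lemma demands a single conic singularity. The plan is to do all gluings along segments sharing endpoints: glue consecutive slits end-to-end, then identify all endpoints into one cone point by additionally slit-gluing the blocks for $l=1,\dots,k$ to one another along segments emanating from a common point. Each slit contributes $2\pi$ extra angle per endpoint identification, and counting these should give total angle $2\pi(10k+15)$. I would check that count via Euler characteristic and Gauss--Bonnet for dilation surfaces: $\sum(\text{angle}/2\pi-1)=2g-2$, with the genus determined by the number of tori and gluings, which is linear in $k$. The holonomy around the single point must equal $1$, since it is the product of the holonomies of loops around the slits. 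Finally, the saddle connection $\gamma_l$ in block $l$ stays an $l$-horizon saddle connection after the blocks are glued, provided the gluing slits lie in a direction different from $\theta_0$ and on radial segments, so the trapping argument above is unaffected.
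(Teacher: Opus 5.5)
There is a genuine gap: your construction has no mechanism that lets a leaf cross $\gamma_l$ more than once, and your own monotonicity argument, followed to its conclusion, rules this out. Take a leaf of $\mathcal{F}_\theta$ with $\theta\notin\{\theta_0,\theta_0+\pi\}$ that passes through a slit on a radial segment of direction $\theta_0$. Its continuation in the next Hopf torus develops in $\mathbb{C}^*$ to a straight ray starting at a point of $\mathbb{R}e^{i\theta_0}\setminus\{0\}$. Such a ray meets the line $\mathbb{R}e^{i\theta_0}$ only at its starting point. Every lift of every slit of that torus develops into $\mathbb{R}e^{i\theta_0}$, so the leaf never reaches another slit: it is trapped in the first torus it enters. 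Running the argument backwards, it had never crossed a slit before either.

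So every transversal leaf crosses the whole slit system of a block at most once. The leaf with exactly $l$ crossings that you plan to exhibit does not exist, and the stacking buys nothing. A $1$-horizon saddle connection does satisfy the upper bound $\le l$ of the definition, but then property 2 needs no stacking at all; the paper's proof explicitly produces a leaf meeting $\gamma_l$ exactly $l$ times. Adding joining slits of a second direction to the same tori only opens uncontrolled escape routes, so the trapping argument is affected, not ``unaffected''.

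The missing idea is a flat region where leaves are \emph{not} trapped. The paper takes the polygon $P_r$ made of the rectangles $[0,r]\times[2i,2i+1]$, $0\le i\le k+1$, joined by the strip $[r/2,3r/4]\times[1,2(k+1)]$. It translates $\{0\}\times[2i,2i+1]$ onto $\{r\}\times[2(i+1),2(i+1)+1]$ for $0\le i\le k$. A horizontal leaf then runs through rectangles $1,\dots,l$ and crosses the diagonal $\gamma_l$ from $(r/2,1)$ to $(3r/4,2l+1)$ once in each. Hopf tori are used only as traps, glued along the remaining sides. Since no segment in $P_r$ joins $\{0\}\times[2i,2i+1]$ to $\{r\}\times[2j,2j+1]$ for $i<j$, leaves cannot come back.

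Two further steps fail as stated. First, a concatenation of glued slit pieces is not a saddle connection: the junction points are slit endpoints, hence cone points, lying in its interior.

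Second, the single singularity of angle $2\pi(10k+15)$ is only hoped for. You give no gluing pattern that identifies all endpoints, and with your blocks the count cannot come out right. By the formula $\sum(\text{angle}/2\pi-1)=2g-2$ that you quote, one cone point of angle $2\pi(10k+15)$ forces $g=5k+8$. But you use $\sum_{l=1}^{k}(l+1)=k(k+3)/2$ Hopf tori, and each still contributes a handle after being slit along arcs that do not close up. Hence $g\ge k(k+3)/2>5k+8$ as soon as $k\ge 9$: the genus is not linear in $k$.

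In the paper the single point comes from the gluing pattern: every vertex of $P_r$ and every slit endpoint is identified to one point. The angle is then the interior angle sum $2\pi(4k+5)$ of $P_r$ plus $4\pi$ for each of the $3k+5$ Hopf tori.
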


\begin{proof}

\begin{figure}
     \centering
     \includegraphics[width=0.8\linewidth]{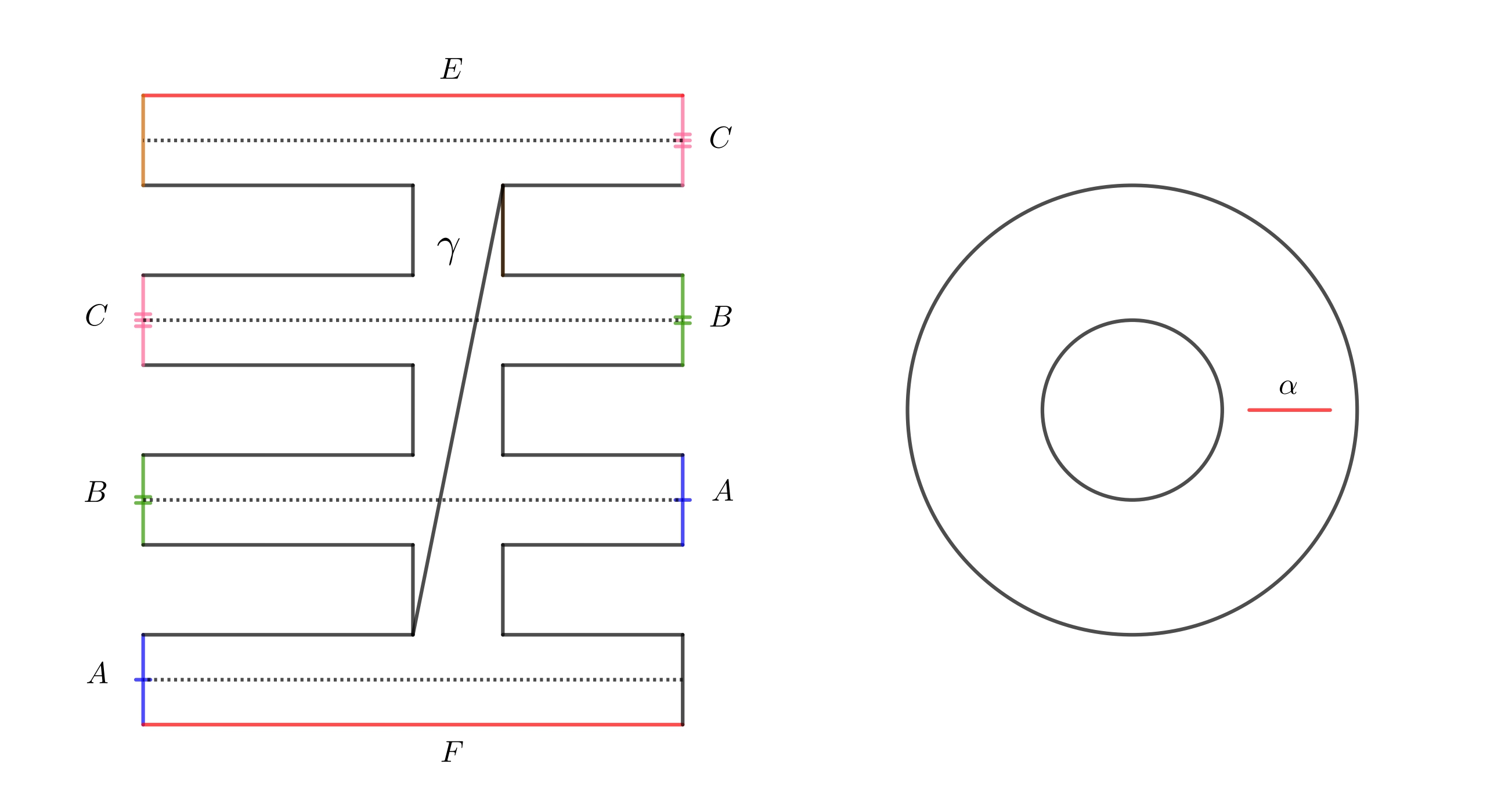}
     \caption{$\gamma$ is an example of a $2$-horizon saddle connection. Sides $A$, $B$, and $C$ are glued by translations; the other sides are glued to Hopf tori. In this construction sides $E$ and $F$ are glued to a radial segment $\alpha$ contained in a Hopf torus in order to trap the directional flows. The dotted line is an example of a directional flow that crosses $\gamma$ exactly twice.}
     \label{Figure 2-horizon}
 \end{figure}

    We begin with the construction of $M$. It will be built from a Euclidean geodesic polygon $P_r$, on which we will perform the slit construction in order to create traps for the directional flows.\\
    
    Take $r\in (0,1)$ and consider $P_r\subset \mathbb
    R^2$ to be the Euclidean geodesic polygon determined by $$\left( \bigcup_ {i=0}^{k+1} [0,r]\times [2i, 2i+1]\right)\cup [r/2, 3r/4 ]\times [1, 2(k+1)]$$

    Given $l\in \{1,\dotsb, k\}$ let $\gamma_l$ be denote  the straight segment in $P_r$ joining $(r/2,1 )$ and $(3r/4, 2l+1)$. The first identifications that we will consider in $P_r$ have the goal of creating a horizontal leaf that  intersects $\gamma_l$ exactly $l$-times. To do this, for each $i\in\{0,1, \dotsb, k\}$ identify the segments $\{0\}\times [2i, 2i+1]$ and $\{r\}\times [2(i+1), 2(i+1)+1]$ by a translation.\\

    Now, in order to create traps for the directional foliations  we will perform the slit construction gluing the remaining sides of $P_r$ to radial segments contained in Hopf tori. This gluing is performed as follows:

    Consider $H^1, \dotsb, H^{2(k+1)+1}, V^1, \dotsb, V^{k+2}$ Hopf tori with $\alpha_i\subset H ^i$ and $\beta_j\subset V^j$ horizontal and vertical segments respectively, and make the following surgeries and identifications:

    \begin{itemize}

         \item For each $i\in \{1, \dotsb k+1\}$ glue $[0, r/2]\times \{i\}$ and $[0, r/2]\times \{2(k+1)+1-i\}$ with $\alpha_{i}\subset H^i$.

         \item For each $i\in \{1, \dotsb, k+1\}$ glue $[3r/4, r]\times \{i\}$ and $[3r/4, r]\times \{2(k+1)+1-i\}$ with $\alpha_{k+1+i}\subset H^{k+1+i}$.

         \item Glue $[0, r]\times \{0\}$ and $[0,r]\times \{2(k+1)+1\}$ with $\alpha_{2(k+1)+1}\subset H^{2(k+1)+1}$.

         \item For each $i\in \{1, \dotsb k\}$ glue $\{3r/4\}\times [2i-1, 2i]$ and $\{r/2\}\times [2i+1, 2(i+1)]$ with $\beta_i\subset V^i$.
         
         \item Glue $\{r\}\times [0,1]$ and $\{r/2\}\times [1,2]$ with $\beta_{k+1}\subset V^{k+1}$.
         
         \item Glue $\{0\}\times [2(k+1), 2(k+1)+1]$ and ${3r/4}\times [2(k+1)-1, 2(k+1)]$ with $\beta_{k+2}\subset V^{k+2}$.

    \end{itemize}

    Note that after performing these surgeries, all the vertices of  $P_r$ are identified to a unique point in the quotient space. Thus, if we denote by $M$ the resulting dilation surface, then $M$ has a unique conic singularity whose angle is given by the sum of the inner angles of $P_r$ and $4\pi$ times the number of Hopf tori used in the construction. Therefore, $M$ has a unique conic singularity and its conical angle is $2\pi(10k+15)$.\\

    By construction, given $i, j\in \{0, \dotsb, k\}$ with $i< j$, we cannot join a point in $\{0\}\times [2i, 2i+1]$ to a point in $\{r\}\times [2j, 2j+1]$ via a straight line segment contained in $P_r$. Consequently, any trajectory along a directional foliation must descend from its starting point until it eventually hits a $1$-horizon saddle connection, preventing it from returning. This dynamical behavior, together with the gluing described above, turns $\gamma_l$ into an $l$-horizon saddle connection.
\end{proof}

Note that by rotating $P_r$ and  suitably adapting the gluing in the construction of $M$, we can also produces horizon saddle connections in any prescribed direction. This observation will be used in the proof of Theorem \ref{Main Theorem 1} and Theorem \ref{Main Theorem 2}.\\

\subsection{The Veech group}

Given a dilation surface $M$ and $f\in\Aff(M)$, as seen in Definition \ref{Definition of affine homeo}, in local coordinates, $f$ has the form $z\mapsto r A_fz +v$ for $r\in \mathbb{R}^+, v\in\mathbb{C}$ and $A_f\in \SL(2,\mathbb{R})$. Due to the the structural group in a dilation surface, the matrix $A_f$  does not depend on the choice of the local charts. This allows us to define a homomorphism

\begin{align*}
    V: \Aff(M)&\to \SL(2,\mathbb{R})\\
     f&\mapsto A_f
\end{align*}

\begin{definition}
    Let $M$ be a dilation surface, the \textbf{Veech group} of $M$, $\Gamma(M)$, is the subgroup of $\SL(2,\mathbb{R})$ defined by $\Gamma(M):=V(\Aff(M))$.
\end{definition}

The following result was shown by Duryev, Fougeron, and Ghazouani in \cite{DFG19} and highlights the contrast with the Veech group of a closed translation surface, which is always discrete:

\begin{theorem}[{\cite[Theorem~1]{DFG19}}]
    Let $M$ be a closed dilation surface of genus $g \geq 2$. Then there are two possibilities for its Veech group:
    \begin{enumerate}
        \item $\Gamma(M)$ is the subgroup of $\SL(2, \mathbb{R})$ consisting of upper triangular matrices, and $M$ is a Hopf surface.
        \item $\Gamma(M)$ is discrete.
    \end{enumerate}
\end{theorem}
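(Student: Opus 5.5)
The plan is to study the action of $\Aff(M)$ on the countable set of saddle connections of $M$ and to split according to whether $M$ can be cut into flat triangles. Since $M$ is closed and $\Sigma\neq\emptyset$ (genus $\geq 2$), the set of saddle connections, and hence the set $\Theta\subset\mathbb{S}^1$ of their directions, is countable. By the bullet list above, $\Theta$ is invariant under the projective action of $\Gamma(M)$. Assume $\Gamma(M)$ is not discrete. Then its closure $\overline{\Gamma(M)}$ contains a nontrivial one-parameter subgroup $H$. Up to conjugacy, $H$ is either rotations, diagonal, or unipotent.

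The first step is a rigidity statement for the triangulable case. I would invoke (or reprove) Veech's dichotomy for dilation surfaces: either $M$ admits a triangulation by saddle connections, or $M$ contains an embedded affine (dilation) cylinder, i.e.\ a region isometric to a sector of a Hopf annulus. Suppose $M$ admits a triangulation $\mathcal{T}$ by saddle connections. Take $f_n\in\Aff(M)$ with pairwise distinct $A_{f_n}\to I$.
\begin{itemize}
\item Each $f_n(\mathcal{T})$ is again a triangulation by saddle connections, and its triangles have angles close to those of $\mathcal{T}$.
\item The number of triangles is fixed by the Euler characteristic.
\item The gluing pattern and the (projectivized) side shapes then lie in a finite-dimensional space in which the triangulations with angles bounded away from $0$ form a discrete set up to dilation.
\end{itemize}
Passing to a subsequence, $f_m^{-1}f_n$ preserves $\mathcal{T}$ combinatorially. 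An affine map preserving a flat triangle up to dilation, together with its side directions, has derivative $\pm I$. Hence $A_{f_n}$ is eventually constant, a contradiction. So in the triangulable case $\Gamma(M)$ is discrete.

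The second step is the non-triangulable case. Here $M$ contains affine cylinders. I would show that non-discreteness forces these cylinders to fill $M$. Every saddle connection direction must be fixed by the one-parameter subgroup $H$: a saddle connection in a non-fixed direction would be moved by elements of $\Aff(M)$ with derivative close to $H$ to a continuum of nearby saddle connections, which is impossible by countability and a compactness argument on the boundary saddle connections of a cylinder. So $\Theta$ has at most two points, and at most one if $H$ is unipotent. Then every complementary region of the saddle connections in the (at most two) directions is foliated by affine cylinders all sharing one common direction. This is exactly the definition of a Hopf surface: a union of affine cylinders glued along saddle connections parallel to a single direction, which I normalize to be horizontal. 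Finally I would construct the affine maps directly on such a surface:
\begin{itemize}
\item Each affine cylinder is a quotient of a sector of $\mathbb{C}^*$ by $z\mapsto\lambda z$, and it admits the maps $\begin{pmatrix} a & b\\ 0 & a^{-1}\end{pmatrix}$ (up to a dilation factor) fixing its horizontal boundary setwise.
\item These maps glue along the common horizontal saddle connections, which gives all upper triangular matrices.
\item Conversely, $\Gamma(M)$ must fix the unique cylinder direction, so $\Gamma(M)$ is exactly the upper triangular group.
\end{itemize}

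The main obstacle is the step in the second paragraph that passes from $\Gamma(M)$ being non-discrete to $M$ being filled by parallel affine cylinders. Invariance of the countable set $\Theta$ under $\Gamma(M)$ does not pass to $\overline{\Gamma(M)}$ directly. I would first make the compactness argument for saddle connections of uniformly bounded combinatorial length work, measuring length in a fixed finite atlas. If that fails, I would instead use the Duryev--Fougeron--Ghazouani decomposition of closed dilation surfaces into affine cylinders and a ``flat'' part.
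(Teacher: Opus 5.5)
The paper does not prove this statement (it is quoted from \cite{DFG19}), so I am judging your outline on its own terms. There is a genuine gap, and it is the step you flag yourself, which is the heart of the theorem: getting from ``$\Gamma(M)$ is not discrete'' to ``$M$ is filled by parallel affine cylinders''. The mechanism you propose cannot work. A priori $\Gamma(M)$ need not be closed, and it could be countable yet non-discrete (say, dense in $H$). Each $f\in\Aff(M)$ sends a saddle connection $\gamma$ to a single saddle connection, so you only ever produce countably many images of $\gamma$. No ``continuum'' arises, and the countability of $\Theta$ gives no contradiction. What is needed is a \emph{finiteness} statement for some $\Aff(M)$-invariant family of geometric objects, which forces elements with derivative near $I$ to fix it. Concretely, you need two facts. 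First, $M$ is the union of finitely many maximal affine cylinders of angle $\geq\pi$ and a (possibly empty) subsurface, bounded by saddle connections, that is triangulable by saddle connections (a relative form of Veech's criterion). Second, $\Aff(M)$ preserves this splitting. If the triangulable part is non-empty, your fat-triangulation argument run on it gives discreteness. If it is empty, a finite-index subgroup of $\Aff(M)$ preserves each cylinder and each of its boundary components, and everything reduces to a computation on one cylinder. Falling back on the Duryev--Fougeron--Ghazouani decomposition is legitimate, but this reduction still has to be carried out, and your outline does not do it.

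There are two further points. First, in the triangulable case the decisive fact is not a discreteness property of a finite-dimensional shape space. What you need is that a fixed closed dilation surface carries only finitely many saddle-connection triangulations with all angles $\geq\varepsilon$, and this requires a compactness argument on $M$ itself. Each edge lies in the union of its two adjacent $\varepsilon$-fat triangles, which is star-shaped from the edge. Developing from a limit of midpoints, infinitely many distinct such edges would accumulate on a saddle connection $e$. But a saddle connection close to $e$ with the same endpoints equals $e$, because a geodesic bigon cannot develop. With this lemma, $f_n(\mathcal{T})$ takes finitely many values and your subsequence argument closes (with $A_{f_m}^{-1}A_{f_n}=I$, since oriented edge directions are preserved).

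Second, your endgame mishandles the two-direction case: there you must prove discreteness, not that the cylinders share one direction. Write a cylinder $C$ as a sector modulo $z\mapsto\lambda z$, and suppose its boundary directions $v_1,v_2$ are not parallel. Any affine self-map of $C$ preserving each boundary component lifts to $z\mapsto rAz$ with $Av_1=av_1$ and $Av_2=a^{-1}v_2$. It rotates the two boundary closed geodesics by $\log(ra)$ and $\log(r/a)$ modulo $\log\lambda$. Each boundary component carries singularities, so both rotations, and hence $a$, are confined to a discrete set. Only when every cylinder has angle in $\pi\mathbb{Z}$ and all boundaries are parallel (connectedness then gives a single direction) do the maps $z\mapsto a^{-1}Az$, with $A=\begin{pmatrix} a & b\\ 0 & a^{-1}\end{pmatrix}$ and $a>0$, fix every boundary pointwise and glue. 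Note also that fixing the horizontal direction alone allows $a<0$, and whether such elements lie in $\Gamma(M)$ depends on $M$.
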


An affine homeomorphism maps horizon saddle connection to horizon saddle connection. Indeed, affine homeomorphisms send singularities to singularities and saddle connections to saddle connections, and preserve the intersection number. Together with the following  result of Tahar in \cite{Tahar21}, this fact highlights the role played by horizon saddle connections in determining the algebraic structure of the Veech group of a closed dilation surface.

\begin{theorem}{\cite[Theorem~1.3]{Tahar21}}
Let $M$ be a closed dilation surface. If $M$ has saddle connections in at least three directions, then the Veech group of $M$ is finite.
\end{theorem}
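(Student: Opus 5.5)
The proof rests on a counting argument: affine homeomorphisms permute horizon saddle connections, and on a closed surface there should be only finitely many of them. Together with the rigidity of $\SL(2,\mathbb{R})$ acting on the projective line, this forces $\Gamma(M)$ to be finite. I read the hypothesis as the existence of \emph{horizon} saddle connections in at least three pairwise distinct directions, which is how the statement is used in the introduction. The plan has three steps.

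First, let $\mathcal{H}$ denote the set of horizon saddle connections of $M$ and $D(M)\subset\mathbb{RP}^1$ the set of their directions. As observed before the statement, an affine homeomorphism $f$ sends singularities to singularities and saddle connections to saddle connections. It also sends leaves of $\mathcal{F}_\theta$ to leaves of $\mathcal{F}_{A_f\theta}$ and preserves intersection numbers. Hence $f$ maps $k$-horizon saddle connections to $k$-horizon saddle connections, and the line of $f(\gamma)$ is $A_f$ applied to the line of $\gamma$. So $\Gamma(M)$ acts on $D(M)$ via the projective action of $\SL(2,\mathbb{R})$ on $\mathbb{RP}^1$.

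Second, and this is the step I expect to be the main obstacle, I need $D(M)$ to be finite when $M$ is closed. I would show this by proving that $\mathcal{H}$ itself is finite. The idea is to use the trapping mechanism visible in the Hopf-torus examples. If $\gamma$ is $k$-horizon, every leaf crossing $\gamma$ returns to it at most $k$ times. Following the flow in any direction from $\gamma$, leaves must therefore eventually be captured by attracting regions (affine cylinders) rather than recurring to $\gamma$. I would try to prove that each horizon saddle connection lies on the boundary of a maximal affine cylinder, or is determined by finitely many combinatorial data attached to such cylinders. The number of pairwise disjoint, non-homotopic maximal cylinders on a closed surface of genus $g$ with $|\Sigma|$ singularities is topologically bounded, for instance by $3g-3+|\Sigma|$. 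Finitely many singularities and finitely many cylinder boundaries should then give finitely many candidates.

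Third, assume $D(M)$ is finite and contains three distinct lines $\ell_1,\ell_2,\ell_3$. The action of $\Gamma(M)$ on $D(M)$ gives a homomorphism to a finite symmetric group, whose kernel $K$ has finite index. Every $A\in K$ fixes $\ell_1,\ell_2,\ell_3$, so it has three distinct real eigenlines. A $2\times2$ matrix with three distinct eigenlines is scalar, and since $\det A=1$ we get $A=\pm I$. Thus $K\subset\{\pm I\}$ is finite, and $\Gamma(M)$, being a finite extension of $K$, is finite.
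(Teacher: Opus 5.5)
The paper does not prove this statement; it only quotes it from \cite{Tahar21}. Your argument therefore has to stand on its own.

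Reading it with \emph{horizon} saddle connections is correct. As printed, the statement is false: the regular octagon translation surface has saddle connections in infinitely many directions and an infinite (lattice) Veech group.

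Your Steps 1 and 3 are fine. Affine homeomorphisms permute $k$-horizon saddle connections, so $\Gamma(M)$ preserves $D(M)\subset\mathbb{RP}^1$. If $D(M)$ is finite with at least three points, the finite-index kernel of $\Gamma(M)\to\mathrm{Sym}(D(M))$ fixes three distinct lines and hence lies in $\{\pm I\}$.

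That reduction is the easy part. Everything rests on Step 2, and Step 2 is not proved. Without finiteness of $D(M)$, Step 3 has nothing to act on: nothing you say rules out an infinite $D(M)$ being permuted by an infinite discrete subgroup of $\SL(2,\mathbb{R})$.

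The heuristic you offer for Step 2 does not hold together.
\begin{enumerate}
\item[(i)] You give no argument that a horizon saddle connection lies on the boundary of a maximal affine cylinder, and for $k\ge 2$ there is no reason to expect it. In Lemma~\ref{Construction of k horizon saddle connections}, the $l$-horizon saddle connections $\gamma_l$ cut diagonally across the flat polygon $P_r$, while the affine cylinders of that surface sit in the attached Hopf tori.
\item[(ii)] The trapping observation only says that leaves crossing $\gamma$ eventually stop returning to $\gamma$. It does not say they are captured by affine cylinders, since they may end at singularities or accumulate on chains of saddle connections. Still less does it give a controlled finite family of cylinders.
\item[(iii)] The bound $3g-3+|\Sigma|$ applies to pairwise disjoint, pairwise non-homotopic essential annuli. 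You have not shown that the cylinders you want to count have these properties; a priori, maximal cylinders in different directions can cross.
\end{enumerate}
Finally, ``determined by finitely many combinatorial data'' is not an argument.

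What is missing is a genuine finiteness theorem for horizon saddle connections, or at least for their directions, on a closed dilation surface. That is the substantive input behind Tahar's result. Until you prove it, your proposal only establishes the implication ``$D(M)$ finite and $|D(M)|\ge 3$ $\Rightarrow$ $\Gamma(M)$ finite''.
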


\section{Proof of main results.}

This section presents the proof of the main results in this paper. The first one proves that any infinite-type topological surface can be endowed with a dilation structure with special properties: all singularities are of the same conic angle and trivial holonomy, except for three particular singularities. Moreover the surface has enough space to allow constructions that will be important in the prove of the subsequent  theorems.

The process used to construct this dilation surfaces is based on its version for translation surfaces introduced by Camilo Maluendas and Anja Randecker. See {\cite[Theorem~2]{Ran16}}.

\begin{figure}
    \centering
    \includegraphics[width=\linewidth]{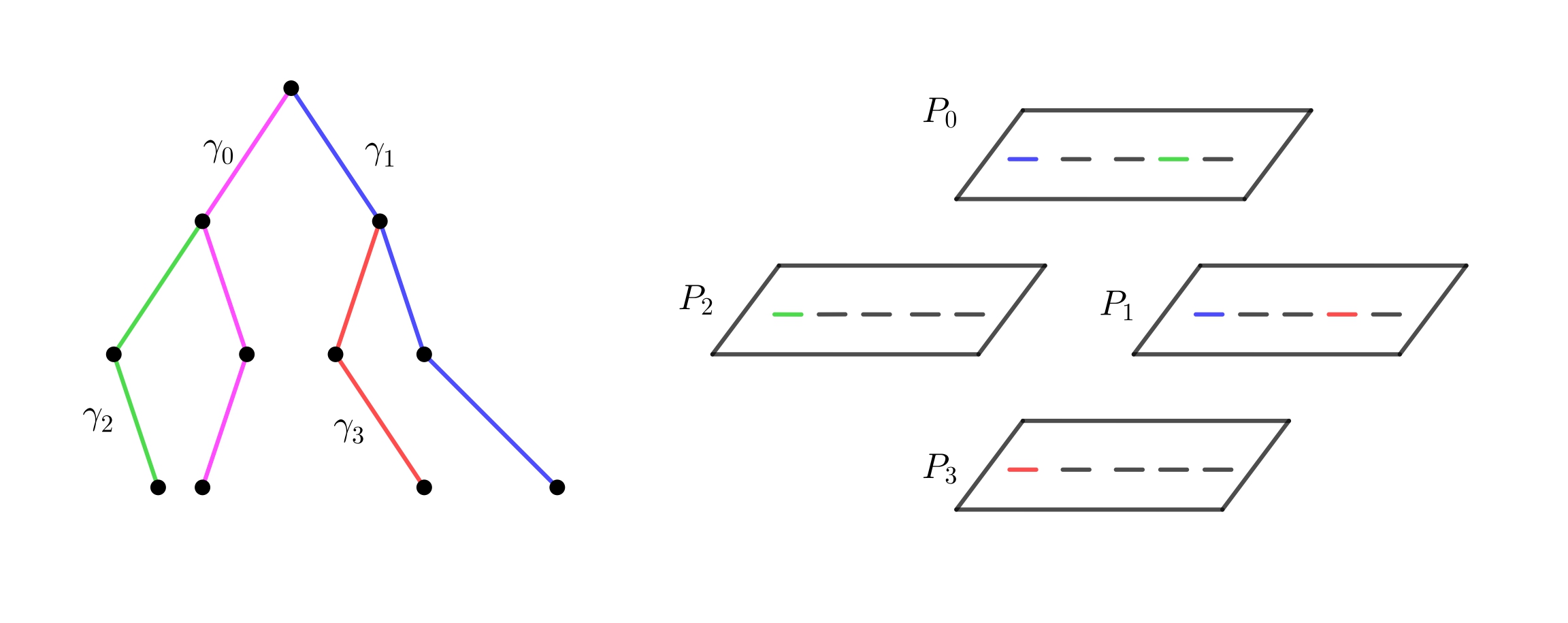}
    \caption{An example of  some gluing considering in the end-grafting construction.}
    \label{Figure_End_Grafiting_Construction}
\end{figure}

\begin{lemma}\label{Lemma_EG_Dilation}
    Let $E^\prime\subset E$ be non-empty closed subsets of the Cantor space. There exists a uncountable family of non-isomorphic dilation surfaces $\{M_r\}_{r\in(0,1)}$  such that for all $r\in (0,1)$ the following properties hold:

    \begin{enumerate}
        \item $E^g(M_r)\subset E(M_r)$ is homeomorphic to $E^\prime \subset E$.
        \item All the singularities of $M_r$ have conic angle $4\pi$ and trivial holonomy, except for three singularities $\sigma^1,\sigma^2$ and $\sigma^3$ that have  conic angle $2\pi$ and holonomy $r$, $\frac{1+r}{2r}$ and $\frac{2}{1+r}$ respectively.

        \item Given a horizontal closed half-plane $P\subset \mathbb{C}$, there exists a  embedding $I:P\hookrightarrow M_r$ that is in local coordinates a dilation and such that $\overline{I(P)}$ does not intersect any saddle connection on $M_r$.
    \end{enumerate}
\end{lemma}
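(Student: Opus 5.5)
We adapt the end-grafting construction of Maluendas and Randecker \cite[Theorem~2]{Ran16}: a tree of planes glued along slits, with one ingredient that introduces the three special singularities and serves as the rigid marker distinguishing the members of the family. We realize the nested end space $E'\subset E$ as the ends of a rooted binary tree with some branches pruned. Concretely, view $E$ as the end space of a subtree $T\subset T_2$ of the full binary tree $T_2$. Attach to each vertex $v$ of $T$ a copy $\mathbb{C}_v$ of the flat plane. For each edge $v\to w$, perform the slit construction between $\mathbb{C}_v$ and $\mathbb{C}_w$ along parallel horizontal segments of unit length. Different edges at the same vertex use disjoint slits placed far apart, say on the positive real axis at $[2n,2n+1]$. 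Genus is controlled the standard way. Along every edge leading toward a point of $E'$, we glue by a \emph{pair} of parallel slits, which adds one handle. Along edges whose subtree of ends misses $E'$, we glue by a single slit. Since $E'$ is closed, the ends accumulated by genus are exactly those in $E'$. The homeomorphism of nested end spaces then follows as in \cite{Ran16}, and Theorem~\ref{Theorem classification of surfaces} identifies the topology. Every slit endpoint becomes a cone point of angle $4\pi$ with trivial holonomy, because all gluings so far are translations.

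Next we insert the three special singularities. In the root plane $\mathbb{C}_{v_0}$, far from all slits, we cut out a small triangle and reglue its sides by dilations, or equivalently we glue in a small ``dilation triangle'' region. This produces three angle-$2\pi$ cone points whose holonomies multiply to $1$, as they must since a loop around all three bounds a disk, and $r\cdot\frac{1+r}{2r}\cdot\frac{2}{1+r}=1$. A concrete model is to cut the plane along two segments from a common point and reglue by maps of ratio $r$ and $\frac{1+r}{2r}$. The third vertex then acquires holonomy $\frac{2}{1+r}$. The lengths are chosen, for instance in ratio $1:r$ and $1:\frac{1+r}{2}$, so that the gluing closes up. We need to check that for $r\in(0,1)$ the three numbers are pairwise distinct and none equals $1$, which holds since $\frac{1+r}{2r}>1$ and $\frac{2}{1+r}>1$. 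These are the only singularities with nontrivial holonomy. By Lemma~\ref{Equivalence and linear holonomy}, any dilation $M_r\to M_{r'}$ preserves cone holonomies, so the multiset $\{r,\frac{1+r}{2r},\frac{2}{1+r}\}$ is an invariant. Its unique element less than $1$ recovers $r$, so the $M_r$ are pairwise non-isomorphic.

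For property (3), we keep inside the root plane, or inside a far vertex plane, the closed upper half-plane $\{\operatorname{Im} z\ge 1\}$ untouched. All slits and the triangle surgery are placed in $\{\operatorname{Im} z\le 0\}$. The inclusion is then an isometric embedding of a horizontal half-plane, and any other horizontal closed half-plane $P$ is the image of this one under a dilation composed with a translation.

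The main obstacle is showing that $\overline{I(P)}$ meets no saddle connection. A saddle connection from a singularity below could a priori travel straight upward into $I(P)$. We therefore need to arrange that no straight segment enters $I(P)$ and returns to a singularity. To do this, we shrink the embedded half-plane to lie over a region that is already ``trapped'': take the plane $\mathbb{C}_{v_0}$ and put all slits and cone points on the negative imaginary axis side inside a thin region. Alternatively, and this is the route we would try first, we replace the half-plane host by a half-infinite cylinder-free region: glue a Hopf-type trap below $P$ so that every leaf entering $I(P)$ from its boundary line never comes back to a singularity. If that fails, we weaken to choosing $P$ so that its boundary line is itself avoided. Since every straight line in a plane leaves any compact set, a saddle connection meeting $\overline{I(P)}$ would have to exit and re-enter the slit region; placing all slits on a single horizontal line $\operatorname{Im} z=0$ in each plane rules out entering $\{\operatorname{Im} z\ge 1\}$ and returning, except through non-horizontal slits, which we do not use.
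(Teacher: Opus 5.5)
There is a genuine gap in property (1). You attach a plane $\mathbb{C}_v$ to each \emph{vertex} of $T$, and each such plane carries only finitely many slits: a vertex of $T\subset T_2$ has at most three incident edges, and each contributes one or two slits. Choose a disk $D_v\subset\mathbb{C}_v$ containing all of these slits. Its image in $M_r$ is compact, and $\mathbb{C}_v\setminus\overline{D_v}$ is an entire connected component of the complement, because it meets the rest of the surface only through slits lying in $D_v$. So every vertex plane contributes its own isolated planar end. Hence $E(M_r)$ is $E$ together with a countable discrete set of planar ends accumulating on $E$. For example, if $E$ is a single point you get a convergent sequence with its limit, and if $E$ is a Cantor set you get isolated points. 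In neither case is $E^g(M_r)\subset E(M_r)$ homeomorphic to $E'\subset E$. Deferring to Maluendas--Randecker does not repair this, because their construction is not organized by vertices.

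The missing idea is the one in the paper's Steps 1--2. Cover $T_E$ by a family of geodesic rays $\Gamma_E=\{\gamma_n\}$, each $\gamma_n$ starting at a vertex of an earlier $\gamma_{n'}$, and attach one plane $P_n$ to each \emph{ray}. In $P_n$, place an infinite sequence of slits $f(n,k)$ at $x=6(k+1)$ on the positive real axis, one for each vertex $\gamma_n(k)$. Glue $f(n,0)\subset P_n$ to $f(n',k)\subset P_{n'}$ whenever $\gamma_n(0)=\gamma_{n'}(k)$, and add the genus slits $g(n,k),g'(n,k)$ along rays representing points of $E'$. Since the slits of $P_n$ escape to infinity, the end of the plane $P_n$ is exactly the end of $\gamma_n$, so no spurious ends appear. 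This is precisely what Case 1 in the surjectivity of $\Psi$ uses.

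Some smaller remarks. Your surgery for $\sigma^1,\sigma^2,\sigma^3$ is not well defined as written. A triangle has three sides, which cannot be paired. Moreover, two edges can be glued by a dilation composed with a translation only if they are parallel, so your two cuts from a common point would have to be collinear. The paper makes this concrete:
\begin{itemize}
\item cut $P_0$ along $[0,2]\times\{0\}$;
\item glue $[0,\frac{2r}{1+r}]$ on one side to $[0,1]$ on the other, and $[\frac{2r}{1+r},2]$ to $[1,2]$.
\end{itemize}
This gives $2\pi$ cone points at $(0,0)$, $(\frac{2r}{1+r},0)$ and $(2,0)$ with holonomies $\frac{1+r}{2r}$, $r$ and $\frac{2}{1+r}$, all on the real axis.

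The argument you finally settle on for (3) is correct and matches the paper's. If every cut in the host plane lies in $\{\operatorname{Im} z\le 0\}$, then a non-horizontal leaf through $\{\operatorname{Im} z\ge 1\}$ escapes upward to infinity in one direction, and a horizontal one meets no cut, so no saddle connection touches that region. The detours through Hopf traps are unnecessary. Your non-isomorphism argument coincides with the paper's.
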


\begin{proof}

Given $r\in(0,1)$ we will begin by constructing the surface $M_r$ and later, we will show that the family $\{M_r\}_{r\in (0,1)}$ satisfies the desired properties.\\

\noindent\textbf{Construction of $M_r$}. The construction of $M_r$ proceeds in three steps. Starting from a closed subset $E$ of the Cantor space, we associate a rooted subtree $T_E$ of the Cantor Tree whose space of ends is homeomorphic to $E$. On this tree, we define a family of rays $\Gamma_E=\{\gamma_n\}_{n\in\mathbb{N}_0}$ whose union covers $T_E$. 

    In the second step, we consider a collection of copies of the Euclidean plane $\{P_n\}_{n\in\mathbb{N}_0}$, on each of which we define families of slits. These are glued in a convenient manner to obtain a dilation surface whose space of ends is homeomorphic to $E^\prime\subset E$. See Figure \ref{Figure_End_Grafiting_Construction} for an example of the setup.

    Finally, a local surgery produces the conic singularities with non-trivial holonomy depending on the parameter $r$, which allows us to distinguish the resulting surfaces for different values of $r$.\\

    \underline{Step 1}. Given $E$, a closed subset of the Cantor set, there exists $T_E$, a rooted subtree of the Cantor Tree whose space of ends is $E$. Assume that $T_E$ is endowed with the path metric in which each edge has length $1$, and let $v_0$ denote the root of $T_E$.

    The construction of the family $\Gamma_E=\{\gamma_n\}_{n\in\mathbb{N}_0}$ proceeds inductively. We beging by considering a geodesic ray $\gamma_0: [0,\infty)\to T_E$ such that $\gamma_0(0)=v_0$. Now assume inductively that for $n\geq 1$ we have already constructed geodesic rays $\gamma_0,\dotsc,\gamma_{n-1}$ in $T_E$. We now construct a geodesic ray $\gamma_n$ in $T_E$.

    Let $A_n:=T_E\setminus \bigcup_{i=0}^{n-1}\gamma_i([0,\infty))$, in $A_n$ there are finitely many edges that are closest to $v_0$. Let $a_n$ be one of these edges and let $v_n^\prime$ be the vertex of $a_n$ that is closer to $v_0$. There exist $i\in \{0,\dotsc, n-1\}$ and  $k\in\mathbb{N}_0$ such that $\{\gamma_i(k), v_n^\prime\}$ is an edge of $T_E$. Now, set $v_n:=\gamma_i(k)$ and consider a geodesic ray   $\gamma_n: [0,\infty)\to A_n\cup \{v_n\}$ such that  $\gamma_n(0)=v_n$.

    By iterating this procedure, we obtain a family of geodesic rays $\Gamma_E:=\{\gamma_n\}_{n\in\mathbb{N}_0}$ satisfying
    $$\bigcup_{\gamma\in\Gamma_E}\gamma([0,\infty))=T_E.$$

    Note that, since the family $\Gamma_E$ consists of geodesic rays, for each $n\in\mathbb{N}_0$ the set of vertices of $\gamma_n$ is $\{\gamma_n(k)\}_{k\in\mathbb{N}_0}$.\\

    \underline{Step 2}. For each $\gamma_n\in \Gamma_E$ let $P_n$ be a copy of the euclidean plane $\mathbb{R}^2$ with coordinate system $(x,y)$. In the following,  we will define families of slits that we will glue in order to obtain the desired surface $M_r$.

     For each $n\in\mathbb{N}_0$, let us consider in $P_n$, three families of slits $\{f(n,k)\}_{k\in\mathbb{N}_0}$, $\{g(n,k)\}_{k\in\mathbb{N}_0}$ and $\{g^\prime(n,k)\}_{k\in\mathbb{N}_0}$ defined as follows:

    \begin{itemize} 
        \item $f(n,k)$ is the slit with end points $(6(k+1),0)$ and $(6(k+1)+\delta(n),0)$.
        \item $g(n,k)$ is the slit with end points $(6(k+1)+2,0)$ and $(6(k+1)+2+\delta(n),0)$.
        \item $g^\prime(n,k)$ is the slit with end points $(6(k+1)+4,0)$ and $(6(k+1)+4+\delta(n),0)$.
    \end{itemize}

    Where $\delta(n):=r$ if $n=0$ and $\delta(n):=1$ otherwise.\\

    Note that because of the construction of $\Gamma_E$, for each $\gamma_n\in \Gamma_E$ there are uniques $\gamma_{n^\prime}\in\Gamma_E$ and $k\in\mathbb{N}_0$ such that $\gamma_n(0)=\gamma_{n^\prime}(k)$. Then, in $\bigsqcup_{n\in\mathbb{N}_0}P_n$ glue the slit $f(n,0)$ in $P_n$ with the slit $f(n^\prime, k)$  in $P_{n^\prime}$. By carrying out this process and forgetting the unused slits, we obtain a genus zero dilation surface $M^{\prime\prime}_r$.

    If $E^\prime =\emptyset$, take $M_r^\prime:=M^{\prime\prime}_r$ and end the process. Otherwise, it will be necessary consider some extra construction in order to obtain $M_r^\prime$ with accumulated by genus space of ends homeomorphic to $E^\prime$.

    Given $\gamma_n\in \Gamma_E$ consider the linear transformation $T_n:\gamma_n \to [6,\infty)\times \{0\}\subset P_n$ that maps the edge $\{\gamma_n(0),\gamma_n(1)\}$ to the segment $[6,12]\times\{0\}$ in $P_n$. We can glue the family $\{T_n\}_{n\in\mathbb{N}_0}$ to obtain an embedding $i:T_E\hookrightarrow M^{\prime\prime}_r$. Each $e\in E^\prime$  can be represented by a geodesic ray $\gamma_e:[0,\infty)\to T_E$ such that $\gamma_e(0)=v_0$. Now, glue the slits $g(n,k)$ and $g^\prime(n,k)$ in $P_n$ whenever these are both contained in $i(\gamma_e([0,\infty)))$. Denote by $M_r^\prime$ the dilation surface obtained after this gluing.\\

\underline{Step 3}.  Finally, we will perform a surgery in order to create the singularities with non-trivial holonomy. In $P_0\subset M^{\prime}_r$, Consider the straight segment $s_r$ with endpoints  $(0,0)$ and $(2,0)$. Cutting along $s_r$ we obtain two slits $s_r^1$ ans $s_r^2$. Now glue the straight subsegment of $s_r^1$ with endpoints $(0,0)$ and $(\frac{2r}{1+r},0)$ to the subsegment of $s_r^2$ with endpoints $(0,0)$ and $(1,0)$. Additionally, glue the subsegment of $s_r^1$ with endpoints $(\frac{2r}{1+r},0)$ and $(2,0)$ to the subsegment of $s_r^2$ with endpoints $(1,0)$ and $(2,0)$. Call $M_r$ the resulting dilation surface.

This surgery gives rise three singularities $\sigma^1, \sigma^2$ and $\sigma ^3$ of angle $2\pi$ corresponding to the points $(\frac{2r}{1+r},0), (0,0)$ and$(2,0) $ and respectively. Their holonomies are $r, \frac{1+r}{2r}$ and $\frac{2}{1+r}$ respectively.\\

Because of the construction, all the singularities on $M_r$ are conic singularities of angle $4\pi$ except for $\sigma^1, \sigma^2$ and $\sigma^3$. Moreover, given a closed horizontal half-plane $P\subset \mathbb{C}$, it is possible to embed  $P$ into $P_0$ mapping $P$ into the half-plane with coordinates $\mathbb{R}\times [1,\infty)$ this map is in local coordinates a dilation and the closure of this half-plane in $P_n$ does not intersect any saddle connection on $M_r$.\\

\noindent\textbf{$E^g(M_r)\subset E(M_r)$ is homeomorphic to $E^\prime \subset E$}. Let us prove that $M$ has the desired space of ends. For this, we will build a homeomorphism $\Psi: E\to E(M_r)$ such that $\Psi(E^\prime)=E^g(M_r)$. The construction of $\Psi$ is based on constructing an embedding $i: T_E\to M_r$ that maps proper rays in $T_E$ into proper rays in $M_r$ that are wholly contained in a horizontal line, respecting the gluing used in the construction of $M_r$. \\

\underline{Definition of $\Psi$}. In a similar way to how we did with $M^\prime_r$ we can build an embedding $i:T_E\hookrightarrow M_r$ such that for each $n\in\mathbb{N}_0$, $i|_{\gamma_n}: \gamma_n\to [6, \infty)\times\{0\}\subset P_n$  is the linear transformation that maps   the edge $\{\gamma_n(0),\gamma_n(1)\}$ to the segment $[6,12]\times\{0\}\subset P_n$.

Now, for each $e\in E$, we can choose as a representative of this end, a geodesic ray $\gamma_e: [0,\infty)\to T_E$ such that $\gamma_e(0)=v_0$. It follows that $i\circ\gamma_e$ is a proper ray in $M_r$ and then, we can consider $\Psi: E\to E(M_r)$ defined by $\Psi(e):=\e(i\circ  \gamma_e)$, where $\e(i\circ  \gamma_e)$ denotes the element in $E(S)$ represented by $i\circ\gamma_e$.\\

\underline{$\Psi$ is injective}. Given $e, e^\prime\in E$, there exist proper rays $\gamma_e, \gamma_{e^\prime}:[0, \infty)\to T_E$ with $\gamma_e(0)=\gamma_{e^\prime}(0)=v_0$ representing $e$ and $e^\prime$, respectively. If $e\neq  e^\prime$, then there exists $k\in\mathbb{N}$ such that $$ \gamma_e([k, \infty))\cap \gamma_{e^\prime}([k, \infty))=\emptyset. $$

Due to the construction of $\Gamma_E$ and $M_r$, there exists $N=N(k)\in\mathbb{N}$ such that for each $n\geq N$, if $(i\circ \gamma_e)([k, \infty))$ passes through a plane $P_n$ in $M_r$, then $(i\circ \gamma_{e^\prime})([k, \infty))$ does not pass through $P_n$, and vice versa. Set
$$
K:=(i\circ \gamma_e)([0, k])\cup (i\circ \gamma_{e^\prime})([0, k]).
$$
Then $K$ is compact, and $(i\circ \gamma_e)([k+1, \infty))$ and $(i\circ \gamma_{e^\prime})([k+1, \infty))$ lie in  different connected components of $M_r\setminus K$. Therefore, it follows that $\e(i\circ \gamma_e)\neq  \e(i\circ \gamma_{e^\prime})$.\\

\underline{$\Psi$ is surjective}. Given a proper ray $r:[0, \infty)\to M_r$, we will prove that there exists a proper ray $\gamma:[0, \infty)\to T_E$ such that $\Psi(\e(\gamma))=\e(r)$. We prove the existence of $\gamma$ by distinguishing two cases according to the behaviour of $r$.

\begin{itemize} 
    \item \underline{Case 1}. There exists $n\in\mathbb{N}_0$ such that $r$ is eventually contained in $P_n$. In this case, there exists $k\in\mathbb{N}_0$ such that $r([k, \infty))\subset P_n$, and therefore, if we take $\gamma:=\gamma_n\in\Gamma_E$, then $\Psi(\e(\gamma))=\e(r)$.

    \item \underline{Case 2}. $r$ travels through an infinite sequence of planes $\{P_{n_k}\}_{k\in\mathbb{N}_0}$, crossing from one to another in finite time. In this case, for each $k\in\mathbb{N}_0$, there exists $l_k\in\mathbb{N}_0$ such that $r$ crosses from $P_{n_k}$ to $P_{n_{k+1}}$ through the slits $f(n_k, l_k)\subset P_{n_k}$ and $f(n_{k+1}, 0)\subset P_{n_{k+1}}$. The sequence of slits $\{f(n_{k+1}, 0)\}_{k\in\mathbb{N}_0}$ determines a sequence of vertices of $T_E$, $\{\gamma_{n_{k+1}}(0)\}_{k\in\mathbb{N}_0}$, and this sequence of vertices determines an injective proper ray $\gamma:[0, \infty)\to T_E$ such that for each $k\in\mathbb{N}_0$, $\gamma(k)=\gamma_{n_{k+1}}(0)$. Then, it follows that $\Psi(\e(\gamma))=\e(r)$. 
\end{itemize}

\underline{$\Psi$ is continuous}. Let $X$ be a Hausdorff, locally compact, path-connected topological space and $K \subset X$ a compact set. Given $A$, a connected component of $X \setminus K$, let $A^*$ denote the set of ends represented by proper rays that are eventually contained in $A$.

We will show that for any compact $K \subset M_r$ and any connected component $A$ of $M_r \setminus K$, there exists a compact set $K_0$ consisting of a finite set of vertices of $T_E$, and $A_0$, a connected component of $T_E \setminus K_0$, such that $\Psi(A_0^*) = A^*$.

Because of the construction of $M_r$, there exists a subset $I\subset\mathbb{N}_0$ such that $A=(\bigsqcup_{n\in I}P_{n})\setminus K$. Moreover, since $K$ is compact, the set of indices $\{n\in I: f(n,0)\cap K\neq \emptyset \}$ is finite. Let $\{n_i\}_{i=1}^k$ denote this set of indices. Let $A_0$ be the connected component of $T_E\setminus \{\gamma_{n_i}(0)\}_{i=1}^k$ such that the rays $\{\gamma_{n}\}_{n\in I}\subset \Gamma_E$ are eventually contained in $A_0$. By definition, $A_0$ satisfies $\Psi(A_0^*)=A^*$. Thus, $\Psi:E\to E(M_r)$ is continuous.\\

Finally, since $\Psi$ is a continuous bijective map between compact Hausdorff spaces, it is a homeomorphism, and it remains to prove that $\Psi(E^\prime)=E^g(M_r)$. To see this, note that the elements of $E^\prime$ correspond to ends of $T_E$ that are represented by a proper ray $\gamma_e: [0, \infty)\to T_E$ such that $i\circ \gamma_e$ passes through an infinite number of slits $g(n,k)$ and $g^\prime(n,k)$. Thus, $\Psi(e)=\e(i \circ \gamma_e)$ is an element of $E^g(M_r)$. Therefore, $E^\prime \subset E$ is homeomorphic to $E^g(M_r)\subset E(M_r)$.\\

\noindent\textbf{Non-isomorphism of the surfaces $M_r$ and $M_{r^\prime}$}. Recall that the only singularities of $M_r$ with angle $2\pi$ have holonomy $r,\frac{1+r}{2r}$ and $ \frac{2}{1+r}$. In particular, the unordered set of holonomies of these singularities is an invariant of the dilation structure of $M_r$. However, if $r\neq r^\prime $, then $r\notin \{r^\prime, \frac{1+r^\prime}{2r^\prime}, \frac{2}{1+r^\prime}\}$, and therefore $M_r$ is not isomorphic to $M_{r^\prime}$.

\end{proof}

\begin{proof}[Proof of Theorem~\ref{Main Theorem 1}]

The proof relies on an explicit geometric construction and proceeds in four main stages. First, we construct a base dilation surface $M_{Id}$ that realizes the prescribed horizon saddle connections and carries a distinguished set of singularities and families of slits. Second, we define an affine action of $G$ on $M_{Id}$ and use it to generate a family of dilation surfaces $\{M_g\}_{g\in G}$. Third, we glue all the surfaces in the family $\{M_g\}_{g\in G}$ to obtain a dilation surface $M_r$ whose space of ends admits the same star decomposition as $S$. Finally, we describe the elements of $\Aff(M_r)$ and show that $\Aff(M_r)\cong G$ and $\Gamma(M_r)=G$.\\

\noindent\textbf{Construction of $M_{Id}$}. The construction of $M_{Id}$ proceeds in four main steps. First, we construct a dilation surface with a suitable space of ends. Second, using the slit construction, we create the desired horizon saddle connections. Third, we perform a local surgery to produce a particular singularity $\delta_{Id}$, which will later allow us to control the affine automorphisms of $M_r$. Finally, we define distinguished families of slits that we will later glue appropriately to obtain $M_r$.\\

   \underline{Step 1}. Because $E(S)$ is self-similar, there exists a star point. We begin by showing that this star point can always be chosen to lie in $E^g(S)$. Since $S$ has infinite genus, there exists $y\in E^g(S)$. Moreover, if $x\in E(S)$ satisfies $y\preccurlyeq x$, then $x\in E^g(S)$. Thus, if we take $x_{\infty}\in E(S)$ a maximal element such that $y\preccurlyeq x_{\infty}$, then it follows from Theorem \ref{Radial_Symmetry_Self_Similar_Equivalence} that $x_{\infty}\in E^g(S)$ is a star point.

    Let $\{E_n\}_{n\in\mathbb{N}}$ be a star decomposition of $E(S)$ with respect to $x_{\infty}$. Taking  $E := \overline{E_1}$ and $E^\prime = \overline{E_1} \cap E^g(S)$, by Lemma \ref{Lemma_EG_Dilation}, there exists a dilation surface $M^{\prime\prime}_{Id}$ that satisfies the following properties:
    \begin{itemize} 
        \item $E^g(M^{\prime\prime}_{Id})\subset E(M^{\prime\prime}_{Id})$ is homeomorphic to $E^\prime\subset E$.
        
        \item All the singularities of $M^{\prime\prime}_{Id}$ are conic singularities of angle $4\pi$, except for three distinguished singularities $\sigma^1_{Id}, \sigma^2_{Id}$ and $\sigma^3_{Id}$ with angle $2 \pi $ and holonomy $r, \frac{1+r}{2r}$ and $\frac{2}{1+r}$ respectively.
        
        \item   There is an upper closed semi-plane $H$ in $M^\prime_{Id}$ with coordinates $(x,y) \in \mathbb{R} \times [0,\infty)$ whose closure in $M_{Id}^{\prime\prime}$ does not intersect any of the saddle connections on $M^{\prime\prime}_{Id}$.
        
    \end{itemize}

    Let us denote by $x_{\infty, Id}$ the element of $E(M^{\prime\prime}_{Id})$ corresponding to $x_\infty$.\\

    \underline{Step 2}. We now construct the horizon saddle connections. Take  $\{d_j\}_{j\in J}$, with $J\subset\mathbb{N}$, as a enumeration of the set of directions $D$. Given $d_j\in D$, let $M^j$ be the dilation surface obtained in Lemma \ref{Construction of k horizon saddle connections} rotated so that it has  a $k_{d_j}$-horizon saddle connection in direction $d_j$. Remember that $M^j$ was built from a polygon and it has a side that is glued to a Hopf torus $H^1$ through a slit $\alpha_1\subset H^1 $ contained in a radial segment. Let $h_j$ be the direction of $\alpha_1\subset H^1 \subset M^j$ after the corresponding rotation. To glue $M^j$ to $M_{Id}^{\prime\prime}$ take $R_j\subset H^1$ a slit contained in a radial segment of direction $h_j$ diametrically opposite to $\alpha_1$. In $\{(x,y)\in H: x<0, y<4\}\subset M_{Id}^{\prime\prime} $, consider the family of slits $\{r_j\}_{j\in J}$, where $r_j$ is the slit in $H$ of length $1/2$ and direction $h_j$, with one of its endpoints at $(-4j+2,2)$.  For each $j\in J$  glue the slits $r_j\subset H$ and $R_j\subset M^j$ to obtain a dilation surface $M^{\prime}_{Id}$ such that for every $d_j\in D$, there exists a $k_{d_j}$-horizon saddle connection in direction $d_j$.  See Figure \ref{Figure_Setup_H} for an example of the setup in $H$.\\

    \underline{Step 3}. To control the affine automorphisms of the surface $M_r$ to be constructed later, we need to introduce a special singularity $\delta_{Id}$. For this purpose, we consider a Euclidean regular octagon $P$ in $\{(x,y)\in H: x<0, y>5\}\subset M^{\prime}_{Id}$. We can remove the interior of $P$ and glue its parallel sides thought  dilations composed with translations, then we obtain a new dilation surface $M_{Id}$ with a unique singularity of angle $10\pi$ (the one arising from the vertices of $P$). We denote this special singularity by $\delta_{Id}$.\\

    \underline{Step 4}. Now, we  define the families of slits that later are used to give rise $M_r$. Let $A = \{g_i\}_{i\in I}$, with $I \subset \mathbb{N}$, be a generating set for $G$.  For each $g_i \in A$, let us consider two families of slits $\{s(g_i, n)\}_{n\in\mathbb{N}}$ and $\{t(g_i, n)\}_{n\in\mathbb{N}}$ defined in $H$ as follows:

    \begin{itemize} 
        \item $s(g_i, n)$ is the horizontal slit with endpoints $(2i, 2(i+n))$ and $(2i+1, 2(i+n))$.
        \item $t(g_i,n)$ is the slit that has an endpoints at $(2(i+n),2i)$, is  parallel to $g_i^{-1}\cdot (1,0)$ and has length $1/2$. Note that this slit is  contained in the horizontal band $(\mathbb{R}^+\times  (2i-1, 2i+1))\cap \{(x,y)\in H: x>y\}.$
    \end{itemize}

    \begin{figure}
        \centering
        \includegraphics[width=\linewidth]{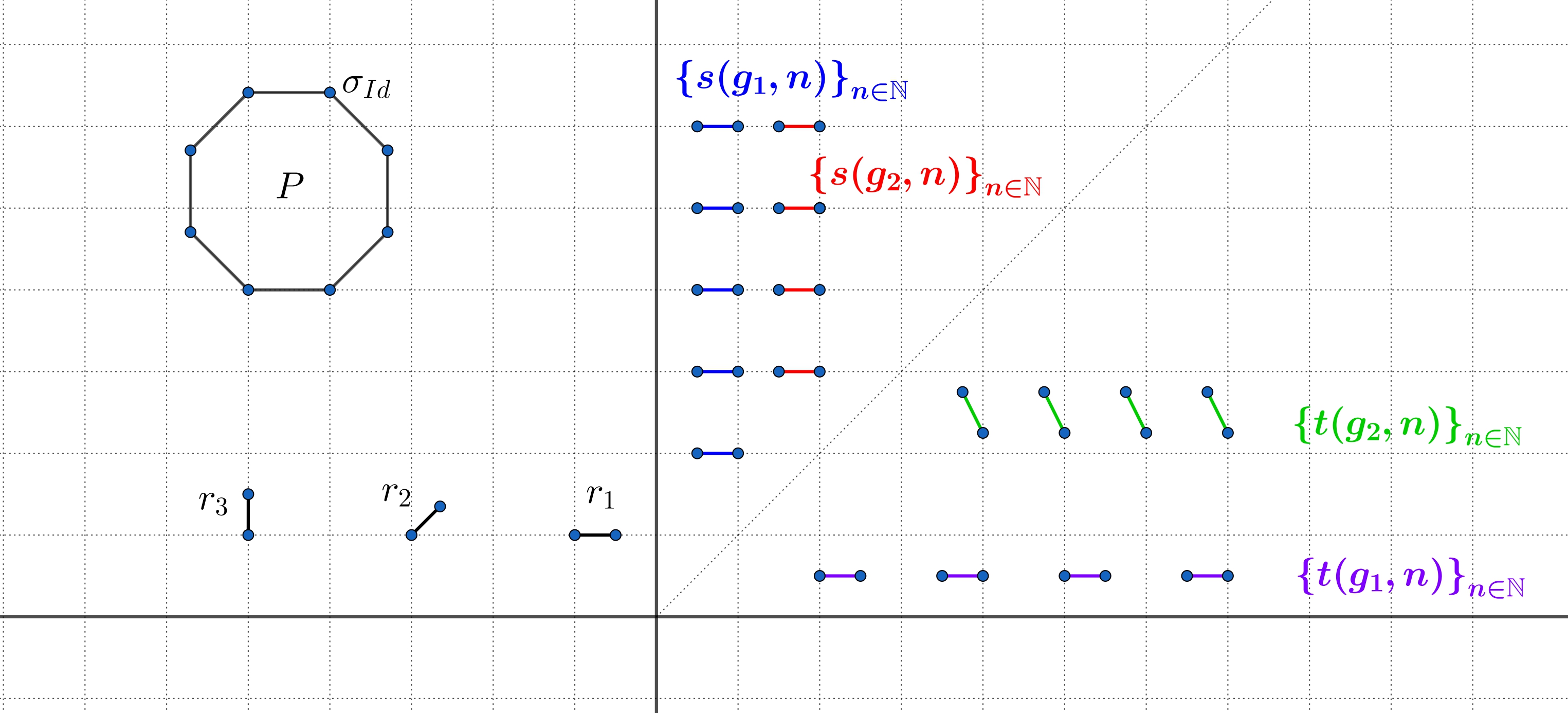}
       \caption{The geometric setup and the families of slits defined in $H$ for $G = \Bigl\langle \begin{pmatrix} 1 & 2 \\ 0 & 1 \end{pmatrix}, \begin{pmatrix} 1 & 0 \\ 2 & 1 \end{pmatrix} \Bigr\rangle$ and $D = \{0, \pi/4, \pi/2\}$.}
        \label{Figure_Setup_H}
    \end{figure}

\noindent\textbf{Construction of the family $\{M_g\}_{g\in G}$.} For each $g\in G$, we consider the  dilation surface $M_g:=g\cdot M_{Id}$, where $g$ acts on $M_{Id}$ as an affine homeomorphism whose derivative is $g$. Let $\delta_g, \sigma_g^1, \sigma_g^2, \sigma_g^3$, and $x_{\infty, g}$ denote the  images in $M_g$ of $\delta_{Id}, \sigma_{Id}^1, \sigma_{Id}^2, \sigma_{Id}^3$, and $x_{\infty, Id}$, respectively. We are already constructed a family of dilation surfaces $\{M_g\}_{g\in G}$.

Since an affine homeomorphism preserves the conic angle of a singularity,  $\delta_{g}$ is the unique singularity in $M_g$ whose conical angle is $10\pi$. Furthermore, $\sigma_g^1, \sigma_g^2$, and $\sigma_g^3$ are the only singularities in $M_g$ of conical angle $2\pi$, and by Lemma \ref{Equivalence and linear holonomy}, their holonomies lie in the set $\left\{r, \frac{1+r}{2r}, \frac{2}{1+r}\right\}$.

Note that for each $(g_i, n)\in A\times\mathbb{N}$, the slits $s(g_i, n)$ and $t(g_i, n)$ in $M_{Id}$ induce slits $g\cdot s(g_i, n)$ and $g\cdot t(g_i, n)$ in $M_g$. And thus, for each $g_i \in A$, $M_g$ is endowed with two family of slits $\{g\cdot s(g_i, n)\}_{n\in\mathbb{N}}$ and $\{g\cdot t(g_i, n) \}_{n\in\mathbb{N}}$.\\

   \noindent\textbf{Construction of $M_r$.} Because of the construction, for each $(g_i, n)\in A\times \mathbb{N}$ and $g\in G$, the slit $g\cdot s(g_i, n)\subset M_g$ is parallel to the slit $(gg_i)\cdot t(g_i,n)\subset M_{gg_i}$. Then, we can glue the surfaces $M_g$ and $M_{gg_i}$ through the families of slits $\{g\cdot s(g_i,n)\}_{n\in\mathbb{N}}$ in  $M_g$ and $\{(gg_i)\cdot(g_i,n)\}_{n\in \mathbb{N}}$ in $M_{gg_i}$. Making this process, gluing $M_g$ with $M_{gg_i}$ for all $g_i\in A$ and $g\in G$ we obtain the desired dilation surface $M_r$.\\

     Before proving that $M_r$ and $S$ have the same space of ends, we analyze how the gluings used in the construction of $M_r$ affect its space of ends.

    First, note that by performing the gluings prescribed above, the family of ends $\{x_{\infty, g}\}_{g\in G}$ gives rise to a unique end $y_\infty\in E(M_r)$. This occurs because gluing countably many copies of the Euclidean plane along two parallel, pairwise disjoint families of slits that do not accumulate results in a Loch Ness monster.

    Second, note that the process used to construct $M_r$ does not create new ends. Given $(g, g_i)\in G\times A$, let $M_g^{g_i}$ be the dilation surface obtained by gluing $M_g$ and $M_{gg_i}$ as described above. For each $e\in E(M_r)\setminus\{y_\infty\}$, there exists a proper ray with constant direction $\gamma_e:[0, \infty)\to M_r$ representing the end $e$. Due to the construction of $M_g^{g_i}$, the ray $\gamma_e$ is eventually contained entirely within either $M_g$ or $M_{gg_i}$. Consequently, any end in $M_r$ distinct from $y_\infty$ comes from a unique end in $M_g$ or $M_{gg_i}$.\\

    We now show that $M_r$ and $S$ have the same space of ends. To do so, we construct a homeomorphism $$\phi: \bigsqcup_{n\in\mathbb{N}}E_n\sqcup \{x_\infty\}\to E(M_r).$$

    For each $n\in\mathbb{N}$, we can construct an embedding $\phi_n: E_n\sqcup \{x_\infty\}\hookrightarrow E(M_r)$ with a closed image  such that $\phi_n(x_ \infty )=y_{\infty}$. We make this construction as follows:

    \begin{itemize} 
        \item Since $G$ is infinite countable, we can consider $\{g_n\}_{n\in\mathbb{N}}$ as an enumeration for $G$. By definition,  $E(M_{g_n})$ is homeomorphic to $ E_n\sqcup \{x_\infty\}$. Thus, there exists a homeomorphism $\psi_n : E_n\sqcup \{x_\infty\}\to E(M_{g_n})$ such that $\psi_n(x_\infty)=x_{\infty, g_n}$.
        
        \item By the construction of $M_r$, there exists an embedding $\varphi_n:E(M_{g_n})\hookrightarrow E(M_r)$ with a closed image  such that $\varphi_n(x_{\infty, g_n})=y_\infty$.

        \item Then, defining $\phi_n:=\varphi_n\circ \psi_n$ we obtain the embedding with the desired properties.
    \end{itemize}

    Note that for any distinct $m, n\in\mathbb{N}$, we have that $\phi_n(E_n\sqcup \{x_\infty\})\cap \phi_m(E_m\sqcup \{x_\infty\})=\{y_\infty\}$ then, we can glue the family $\{\phi_n\}_{n\in\mathbb{N}}$ to obtain an embedding $\phi: \bigsqcup_{n\in\mathbb{N}}E_n\sqcup \{x_\infty\}\to E(M_r)$ such that $\phi(x_\infty)=y_\infty$.

    Since the process used to construct $M_r$ does not create new ends, $\phi$ is surjective and maps $E^g(S)$ to $E^g(M_r)$. Since $\phi$ is a continuous bijection between compact Hausdorff spaces, we can conclude that it is a homeomorphism. Finally, because both $S$ and $M_r$ have infinite genus, it follows from Theorem \ref{Theorem classification of surfaces} that $M_r$ is homeomorphic to $S$.\\

    We  have already constructed a family of dilation surfaces $\{M_r\}_{r\in(0,1)}$. All surfaces in this family are homeomorphic to $S$, but their dilation structures are not isomorphic. To see this, note that by Lemma \ref{Equivalence and linear holonomy}, all singularities in $M_r$ with a conic angle of $2\pi$ have holonomies lying in the set $\left\{ r, \frac{1+r}{2r}, \frac{2}{1+r} \right\}$. However, for any $r^\prime\in(0,1)$ with $r\neq r^\prime$, we have that $r^\prime\notin\left\{ r, \frac{1+r}{2r}, \frac{2}{1+r} \right\}$, and therefore $M_r$ is not isomorphic to $M_{r^\prime}$.\\

    \noindent\textbf{$\Gamma(M)=G$ and $\Aff(M)\cong G$}.We begin with a description of $\Aff(M_r)$. Given $g\in G$, let $T_g \in \Aff(M_r)$ be the affine homeomorphism such that for each $h\in G$, the restriction $T_g|_{M_h}: M_h\to M_{hg}$ is, the map induced by the  affine action of  matrix $g$ in $M_h$. We will prove that $\Aff(M_r)=\{T_g\}_{g\in G}$.

     Take $T \in\Aff(M_r)$. Since the set $\{\delta_g\}_{g \in G}$ consists of the only singularities of $M_r$ with  conical angle $10\pi$, there must exist  $g \in G$ such that $T(\delta_{Id}) = \delta_g$. We will prove that $T = T_g$. Note that the only saddle connections joining $\delta_{Id}$ to itself are the saddle connections corresponding to the sides of the octagon $P$ used to create $\delta_{Id}$. Now, $T_g^{-1}(\delta_g) = \delta_{Id}$, and thus $T_g^{-1} \circ T$ fixes $\delta_{Id}$. Moreover, $T_g^{-1} \circ T$  fixes all the saddle connections that join $\delta_{Id}$ to itself;  then, in local coordinates, $T_g^{-1} \circ T$ is of the form $z\mapsto \lambda z+v$. Consequently, $T_g^{-1}\circ T$ is a biholomorphism from $M_r$ to itself  that point-wise fixes an infinite set (a side of the octagon) with an accumulation point (the singularity $\delta_{Id}$). Therefore, $T_g^{-1}\circ T=Id_{M_r}$ and  hence $T = T_g$.

    Recall that the Veech group of $M_r$ is the image of the homomorphism $V: \Aff(M_r) \to \SL(2, \mathbb{R})$ that maps $f$ to the matrix $A_f$ (see Definition \ref{Definition of affine homeo}). Due to the structure group of a dilation surface, this map is well defined. Now, as seen above, $\Aff(M_r)=\{T_g\}_{g\in G}$, and by the construction of the map $T_g$, $V(T_g)=g$. Thus, the Veech group of $M_r$ is $\Gamma(M_r)=V(\{T_g\}_{g\in G})=G$.

    Finally, note that if $g,h\in G$, then $T_{gh}=T_g\circ T_h$ and $T_{g^{-1}}=T_g^{-1}$. Consequently, the map $\varphi: G\to \Aff(M_r)$ defined by $\varphi(g):=T_g$ is an epimorphism. The injectivity of $\varphi$ follows from the fact that if $T_g=T_h$, then $V(T_g)=V(T_h)$, which implies $g=h$. Therefore, $\Aff(M_r)\cong G$.
    
    \end{proof}

Note that in the proof of Theorem \ref{Main Theorem 1}, it is important that $G$ is an infinite group, because this group allow us to glue the family $\{M_g\}_{g\in G}$ to obtain a surface whose space of ends is  $\bigsqcup_{n\in\mathbb{N}} E_n\sqcup \{x_\infty\}$. For this reason, if we want to realize the finite subgroups of $\SL(2,\mathbb{R})$ as Veech groups we  need to consider a different construction. This is described in the proof Theorem \ref{Main Theorem 2}.

\begin{proof}[Proof of Theorem~\ref{Main Theorem 2}]

    We follow a similar strategy as Theorem \ref{Main Theorem 1}, we first construct a dilation surface $M_{Id}$ with the prescribed set of horizon saddle connections, specific singularities and marked slits. We then generate the family $\{M_g\}_{g\in G}$ and glue them together to obtain the desired $M_r$.\\

    \noindent\textbf{Construction of $M_{Id}$.} The construction of $M_{Id}$ proceeds in four steps: (1) decomposing a genus-zero surface $S_E$ whose space of ends is $E$ into a one-ended subsurfaces, (2) endowing them with compatible dilation structures and gluing then back to obtain a dilation structure on $E$, (3) creating horizon saddle connections and distinguised singularities:  three singularities with non-trivial holonomy and $\delta_{Id}$, and (4) defining several families of slits required to construct $M_r$.\\

    \underline{Step 1}. Given $E:=E(S)=E^g(S)$, consider an embedding $\iota: E\hookrightarrow \mathbb{S}^2$ and define $S_E:=\mathbb{S}^2\setminus \iota(E)$. Consider $\{c_i\}_{i\in I}$ a collection of simple closed curves on $S_E$ that forms a basis for $H_1(S_E,\mathbb{Q)}$. And for each $i\in I$, let $A_i$ be a regular neighborhood of $c_i$ homeomorphic to an open annulus and such that $A_i\cap A_j=\emptyset$ whenever $i\neq  j$.

    We can choose the families $\{c_i\}_{i\in I}$ and $\{A_i\}_{i\in I}$ in such way that the complement of $\bigsqcup_{i\in I}  A_i$ on $S_E$ is the disjoint union of connected one-ended genus zero subsurface $\{S_j\}_{j\in J}$ such that $S_j$ has $\alpha_j\in \mathbb{N}\cup\{\infty\}$ boundary components and $\bigsqcup_{j\in J}E(S_j)$ is dense in $E(S_E)=E.$\\

    \underline{Step 2}.  For each $j\in J$, let $P_j^{Id}$ be a copy of the Euclidean plane $\mathbb{R}^2$ with coordinate system $(x,y)$. In  $P_j^{Id}$ consider the family of slits $\{b_j(m)\}_{1\leq m \leq \alpha_j}$, where  $b_j(m)$ is the horizontal slit with endpoints $(2m,0)$ and $(2m+1,0)$.

     Suppose that for every $j\in J$, $\partial S_j=\gamma_{j_1}\sqcup \cdots \sqcup\gamma_{j_{\alpha_j}}$.  By construction, for each $i\in I$, there exist $j, j^\prime\in J$, $1\leq m_j\leq \alpha_j$ and $1\leq m_{j^\prime} \leq \alpha_{j^\prime}$ such that $\partial \overline{A_i}=\gamma_{j_m}\sqcup \gamma_{j{\prime _{m^\prime}}}$. Then, in $\bigsqcup_{j\in J}P_j^{Id}$ glue the slit $b_j(m)\subset P_j^{Id}$ to the slit $b_{j^\prime}(m^\prime)\subset P_{j^\prime}^{Id}$.  By repeating this process, gluing $S_j$ with $S_{j^\prime}$ whenever they share a boundary associated with the same cylinder, we obtain a dilation surface $M_{Id}^{\prime}$. 
     
    From a topological perspective, we can identify each annulus $A_i$ with a small neighborhood of the glued slits $b_j(m)$ and $b_{j^\prime}(m^\prime)$. Consequently, the process described above is equivalent to gluing back the pieces $\{S_j\}_{j\in J}$ and $\{A_i\}_{i\in I}$. Therefore,  $M_{Id}^{\prime}$ is homeomorphic to $S_E$.\\

     \underline{Step 3}. Fix $j\in J$,  performing a similar construction to that of proof of Theorem~\ref{Main Theorem 1} step 2, we can generate the horizon saddle connections in $P_j^{Id}\subset M_{Id}^\prime$ in the desired set of directions. Subsequently, following the same procedure as in the proof of Lemma~\ref{Lemma_EG_Dilation} step 3, In $P_j ^{Id}\subset M_{Id}^\prime$ we create three dilation singularities of angle $2\pi$ and holonomy  $r, \frac{1+r}{2r}, \frac{2}{1+r}$, which we denote by  $\sigma_{Id}^1, \sigma_{Id}^2$, and $\sigma_{Id}^3$. 

    In $\{(x,y)\in P_j^{Id}: x<0, y>5\}$, consider a geodesic Euclidean regular octagon $P$. Removing the interior of $P$ and identifying its parallel sides, we obtain a dilation surface $M_{Id}$ with a unique conic singularity of angle $10\pi$ (obtained form $P$). We call $\sigma_{Id}$ this special singularity. \\

   \underline{Step 4}. Assume that $G=\{g_1, \dotsb, g_N\}$. For each $g_l\in G, j\in J$  consider the families of slits $\{s_j(g_l, n)\}_{n\in\mathbb{N}}$ and $\{t_j(g_l, n)\}_{n\in\mathbb{N}}$ defined as follows:

   \begin{itemize} 
       \item $s_j(g_l, n)\subset P_j^{Id}$ is the horizontal slit with endpoints $(2l, 2(l+n))$ and $(2l+1, 2(l+n))$.

       \item $t_j(g_l, n)\subset P_j^{Id}$ is that has an endpoint at $(2l+2n, 2l)$, is parallel to $g_l^{-1}\cdot(1,0)$ and has length $1/2$. Note that all these slit are whole contained in $(\mathbb{R}^+\times (2l-1, 2l+1))\cap\{(x,y)\in P_j^{Id}: x >y \}$.
   \end{itemize}

    \noindent\textbf{Construction of $M_r$.} Given  $g\in G$, define $M_g:=g\cdot M_{Id}$ where, for each $j\in J$,  $g$ acts on  $P_j^{Id}\subset M_{Id}$ as an affine homeomorphism whose derivative is $g$, we call $P_j^g$ the respective image of $P_{j}^{Id}$ through this action. Let us denote by $\delta_g, \sigma_g^1, \sigma_g^2$ and $\sigma_g^3$ the respective images in $M_g$ of $\delta_{Id}, \sigma_{Id}^1, \sigma_{Id}^2$ and $\sigma_{Id}^3$.  Following this procedure we obtain a family of dilation surfaces $\{M_g\}_{g\in G}$.

    For any $g\in G$, $\delta_g$ is the unique singularity of $M_g$ of angle $10\pi$. Moreover, $\sigma_g^1, \sigma_g^2$ and $\sigma_g^3$ are the only singularities in $M_g$ whose conic angle is $2\pi$ and by Lemma \ref{Equivalence and linear holonomy}, their holonomies lie in $\{r, \frac{1+r}{2r}, \frac{2}{1+r}\}$.

    Note that for each $(g_l, n, j)\in G\times\mathbb{N}\times J$, the slits $s_j(g_l, n)$ and $t_j(g_l, n)$ in $P_j^{Id}\subset M_{Id}$ induce slits $g\cdot s_j(g_l, n)$ and $g\cdot t_j(g_l, n)$ in $ P_j^g\subset M_g$. And thus, for each $(g_l, j) \in G\times J$, $ P_j^g\subset  M_g$ is endowed with two families of slits $\{g\cdot s_j(g_l, n)\}_{n\in\mathbb{N}}$ and $\{g\cdot t_j(g_l, n) \}_{n\in\mathbb{N}}$. 

    By definition, for each $(g_l, g_k)\in G\times G$ and every $(n, j)\in  \mathbb{N}\times J$, the slit $g_k\cdot s_j(g_l, n)\subset  M_{g_k}$ is parallel to the slit $(g_kg_l)\cdot t_j(g_l, n)\subset  M_{g_kg_l} $. Then, we can glue the surfaces $M_{g_k}$ and $M_{g_kg_l}$ through the families of slits $\{g_k\cdot s_j(g_l, n)\}_{n\in\mathbb{N}}$ in $M_{g_k}$ and $\{(g_kg_l)\cdot t_j(g_l, n)\}_{n\in\mathbb{N}}$ in $M_{g_kg_l}$. Carrying out this process, gluing $M_{g_k}$ and $M_{g_kg_l}$ for all $g_k, g_l\in G$ we obtain the desired dilation surface $M_r$.\\

    Now, we give a description of the elements in $\Aff(M_r)$. Given $g\in G$, let $T_g\in \Aff(M_r)$ be the affine homeomorphism such that for each $h\in G$, the restriction $T_g|_{M_h}: M_h\to M_{hg}$ is the map induced by the affine action of the matrix $g$ on $M_h$. Then, following the same procedure as in Theorem \ref{Main Theorem 1} we can prove that $\Aff(M_r)=\{T_g\}_{g\in G}, \Gamma(M_r)=G$ and  $\Aff(M_r)\cong G$ .\\

    Finally, we know that for a given $r\in (0,1)$, the family $\{\sigma_g^1, \sigma_g^2, \sigma_g^3\}_{g\in G}$ is formed by the only singularities in $M_r$ of conical angle $2\pi$ and their holonomies, lie in the set $\{r, \frac{1+r}{2r}, \frac{2}{1+r}\}$. However, if we take $r^\prime\in (0, 1)$ with $r^\prime\neq r$, then $r^\prime\notin \{r, \frac{1+r}{2r}, \frac{2}{1+r}\}$. Thus, by Lemma \ref{Equivalence and linear holonomy}, the family $\{M_r\}_{r\in(0,1)}$ consists of non-isomorphic dilation surfaces.\\

    \noindent\textbf{$M_r$ is homeomorphic to $S$.} The proof consists of showing that $M_r$ is a surface of infinite genus such that $E(M_r)=E^g(M_r)=E$. Thus, by Theorem \ref{Theorem classification of surfaces}, we can conclude that $M_r$ is homeomorphic to $S$.\\

Recall that $M_r$ is obtained by gluing a finite family of dilation surfaces $\{M_g\}_{g\in G}$. We begin by analyzing the ends of $M_{Id}$. $M_{Id}$ is obtained from $M_{Id}^\prime$ after adding the prescribed horizon saddle connections and additional singularities (see step 3). This process does not create new ends; it merely adds genus. Thus, we can conclude that $E(M_{Id})=E(M_{Id}^\prime)$ (possibly replacing some planar ends with ends accumulated by genus).\\

$M_{Id}^\prime$ is obtained by appropriately gluing a collection of Euclidean planes $\{P_j^{Id}\}_{j\in J}$. Topologically, $M_{Id}^\prime$ is homeomorphic to $S_E$, a genus-zero topological surface whose space of ends is exactly $E$. $S_E$ has a family of one-ended subsurfaces $\{S_j\}_{j\in J}$ such that the union of their ends, $\bigsqcup_{j\in J}E(S_j)$, is dense in $E(S_E)$. In the construction of $M_{Id}^\prime$, we associate each Euclidean plane $P_j^{Id}\subset M_{Id}^\prime$ with the subsurface $S_j\subset S_E$. Consequently, the union of the ends $\bigsqcup_{j\in J}E(P_j^{Id})$ is dense in $E=E(M_{Id}^\prime)=E(M_{Id})$.\\

Note that for any $g\in G$, $M_g$ is obtained from $M_{Id}$ through an affine homeomorphism whose derivative is $g$. Thus, we can conclude that for each $g\in G$, $E(M_g)=E(M_{Id})=E$.\\

Now, we return to the construction of $M_r$. This was built by suitably gluing, for each $j\in J$, the family $\{P_j^g\}_{g\in G}$. For each $g, h\in G$, $P_j^g\subset M_g$ and $P_j^h\subset M_h$ are glued along two infinite families of parallel slits that do not accumulate. Thus, gluing the family $\{P_j^g\}_{g\in G}$ results in a Loch Ness Monster. We denote the resulting surface by $P_j^G$.

Note that the gluing process used to obtain $M_r$ does not create new ends, and thus, since $E(M_g)=E$ for all $g\in G$, we have $E(M_r)=E$.

Recall that for each $g\in G$, the union $\bigsqcup_{j\in J}E(P_j^g)$ is dense in $E(M_g)=E$; consequently, the union $\bigsqcup_{j\in J}E(P_j^G)$ is dense in $E(M_r)=E$. The family $\{E(P_j^G)\}_{j\in J}$ consists entirely of ends accumulated by genus, and because the subset of ends accumulated by genus is a closed subset of the space of ends, we can conclude that $E^g(M_r)=E(M_r)=E$.

\end{proof}

\begin{proof}[Proof of Theorem~\ref{Main Theorem 3}]

    Let $M_r$ be the surface obtained  applying Lemma \ref{Lemma_EG_Dilation} to the pair $E^g(S)\subset E(S)$. Given $g\in G, n\in\mathbb{N}$ let $T_{g, n}$ be the linear map in the plane $P_n\subset M$ induced by the matrix $g$, this map respect the gluing using for the construction of $M_r$ in Lemma \ref{Lemma_EG_Dilation} and then we can glue the family $\{T_{g, n}\}_{n\in\mathbb{N}}$ to obtain a map $T_g\in \Aff(M_r)$ such that $V(T_g)=g$ and then $G\subset V(\Aff(M_r))=\Gamma(M_r)$. Finally, note that by the construction of $M_r$, all its saddle connection are horizontal and thus $\Gamma(M_r)  \subset  G$.

  \end{proof}

\bibliographystyle{alpha} 
\bibliography{sample.bib}

\end{document}